\def\lmjcorespemail{andrius.grigutis@mif.vu.lt}
\def\lmjrunauthor{Andrius Grigutis, Juozas Petkelis}
\def\lmjruntitle{The limit law of the maximum of discrete partial-sums distribution II}

\def\lmjmsc{60G50, 60G70, 60E10}
\def\lmjkeywords{maximum distribution, random walk, limit law, Vandermonde matrix, linear recurrence, initial values, biased Rademacher random walk}

\newcommand\dist{\buildrel d \over =}
\newcommand\al{\alpha}
\newtheorem{ex}{Example}

\documentclass{lmj}
\include{lmj.def}
\usepackage{xcolor}
\usepackage{lipsum}
\usepackage{comment}
\usepackage{bm}
\usepackage{bbm}
\usepackage{caption}
\usepackage{float}
\LMJCLS%

\begin{document}
\bibliographystyle{plainlmj}
\begin{topmatter}
 \title{The limit law of the maximum of discrete partial-sums distribution II}
 \author{Andrius Grigutis}
 \institution{Vilnius University, Faculty of Mathematics and Informatics, Institute of Mathematics}
 \email{andrius.grigutis@mif.vu.lt}
 \author{Juozas Petkelis}
 \institution{Vilnius University, Faculty of Mathematics and Informatics, Institute of Mathematics}
 \email{juozas.petkelis@mif.stud.vu.lt}%
%
\Received
 \end{topmatter}
\LMJarticle 
\begin{abstract}
Let $X_1,\,X_2,\,\ldots,\,X_N$, $N\in\mathbb N$ be independent, discrete, integer-valued random variables. Assume that $X_j\geqslant m_j$ almost surely for each $j=1,\,2,\,\ldots,\,N$, where $m_1,\,m_2,\,\ldots,\,m_N\in\mathbb{Z}$ satisfy $m_1+\cdots+m_N<0$. Furthermore, suppose that the sequence $X_1,\,X_2,\,\ldots$ is periodic in distribution, i.e. $X_k\dist X_{k+N}$ for all $k\in\mathbb N$. We derive computable representations for the distribution functions of $\max\{X_1,\,X_1+X_2,\,\ldots\}$, $\max\{X_2,\,X_2+X_3,\,\ldots\}$, $\ldots$, $\max\{X_N,\,X_N+X_{N+1},\,\ldots\}$.
The obtained formulas are based on a linear recurrence whose initial values are determined from a linear system that involves the roots of an associated characteristic equation and the distributions of $X_1,\,X_2,\,\ldots,\,X_N$. Several examples are presented, including a biseasonal-biased Rademacher random walk for which the distribution, generating functions, and all moments admit explicit closed-form expressions. In addition, we identify and correct several inaccuracies in the results reported in \cite{Grigutis2024}.
 \end{abstract}

\Keywords 
%
\section{Introduction}\label{s:1}

\subsection{From random walk to regularity}

Random walks and their extrema play a fundamental role in probability theory and its applications, including queuing theory, ruin theory, and reliability analysis. A classical problem is to determine the distribution of
$\sup_{n\geqslant 1}S_n$, where $S_n=X_1+\cdots+X_n$ is a random walk. If the random variables $X_1,\,X_2,\,\ldots$ are i.i.d. and have a negative mean, then $S_n\to -\infty,\,n\to\infty$ almost surely, and therefore $\sup_{n\geqslant 1}S_n<+\infty$ with probability one. Determining the distribution of $\sup_{n\geqslant 1}S_n$ has attracted much scientific attention and led to a variety of methods based on the Pollaczek--Khinchine formula, generating-function methods, Wiener--Hopf factorization, and related techniques.

One of the most remarkable phenomena in probability theory is that suitably combining an increasing number of random variables often leads to greater regularity rather than greater disorder. Classic examples include the law of large numbers and the central limit theorem, which describe the asymptotic behavior of sums of independent random variables under appropriate assumptions. Similar phenomena arise in many other probabilistic settings, where increasing combinations of random variables admit tractable limiting distributions. The distribution of $\sup_{n\geqslant 1}S_n$ provides another example of such behavior: although it depends on infinitely many random variables, it often possesses a surprisingly explicit structure.

\subsection{Introduction to the article, notations}\label{sub:notations}

Let $X_1,\,X_2,\,\ldots,\,X_N$, $N\in\mathbb N$ be independent, discrete, integer-valued random variables whose supports start at $m_1,\,m_2,\,\ldots,\,m_N\in\mathbb{Z}$, respectively, and $m_1+m_2+\cdots+m_N<0$. For each $j=1,\,2,\,\ldots,\,N$ define

\begin{align}\label{dist_f}
F^{(j)}_{\infty}(x):=\lim_{T\to\infty}\mathbb{P}\left(\bigcap_{n=j}^{T}\left\{\sum_{k=j}^{n}X_k\leqslant x\right\}\right)=
\mathbb{P}\left(\sup_{ n \geqslant j}\sum_{k=j}^{n}X_k\leqslant x \right),\,x\in\mathbb{Z},
\end{align}
where the sequence $X_1,\,X_2\,\ldots$ is assumed to be $N$-periodic in distribution, i.e. $X_k\dist X_{k+N}$ for all $k\in\mathbb{N}$. In this work, we derive explicit formulas for the distribution functions \eqref{dist_f} via a computable linear recurrence, whose coefficients are determined by the distributions $X_1,\,X_2,\,\ldots,\,X_N$; see Theorem \ref{T2}. To state the result precisely, we first introduce some notations. For the introduced integers $m_1,\,m_2,\,\ldots,\,m_N$, let:
\begin{align*}
D&:=m_1+m_2+\ldots+m_N<0,\\
M_j&:=\max\{m_j,\,m_j+m_{j+1},\,\ldots,\,m_j+m_{j+1}+\ldots+m_{j+N-1}\},\quad j=1,\,2,\,\ldots,\,N,
\end{align*}
where $m_{i+N}=m_{i}$ for all $i\in\mathbb{N}$.
From the definition \eqref{dist_f} it is clear that
\begin{align*}
F^{(j)}_{\infty}(x)=0,\quad x<M_j,\quad j=1,\,2,\,\ldots,\,N.
\end{align*}
Let
\begin{align*}
p^{(j)}_k:=\mathbb{P}(X_j=k),\quad
F_{X_j}(k):=\mathbb{P}(X_j\leqslant k),\quad k\in\mathbb{Z},\quad j=1,\,2,\,\ldots,\,N,
\end{align*}
and
\begin{align*}
S_N:=X_1+X_2+\ldots+X_N,\quad
f_N(k):=\mathbb{P}(S_N=k),\quad
F_{S_N}(k):=\mathbb{P}(S_N\leqslant k),\quad k\in\mathbb{Z}.
\end{align*}

Since the supports of $X_j$ start at $m_j$, we have $p^{(j)}_k=0$ for $k<m_j$ and $f_N(k)=0$ for $k<D$.

We also denote 
\begin{align*}
p^{(i_1,\,i_2,\,\ldots,\,i_r)}_k:=\mathbb{P}(X_{i_1}+X_{i_2}+\ldots+X_{i_r}=k),\quad k\in\mathbb{Z}
\end{align*}
where $r\in\{1,\,2,\,\ldots,\,N\}$ and $1\leqslant i_1<i_2<\ldots<i_r\leqslant N$. In particular,
\begin{align*}
p^{(1,\,2)}_k=\mathbb{P}(X_1+X_2=k),\quad p^{(1,\,3)}_k=\mathbb{P}(X_1+X_3=k),\quad p^{(1,\,2,\,N)}_k=\mathbb{P}(X_1+X_2+X_N=k),
\end{align*}
etc. Thus, the indices appearing in the superscript indicate which random variables out of $X_1,\,X_2,\,\ldots,\,X_N$ are included in the sum, and $r$ denotes the number of indices appearing in the superscript.

For the introduced random variables $X_1,\,X_2,\,\ldots,\,X_N$ we define
\begin{align*}
\mathcal{M}_j&:=\max\{0,\,X_j,\,X_j+X_{j+1},\,\ldots\},\quad j=1,\,2,\,\ldots,\,N.
\end{align*}
It is clear that $\mathcal{M}_1,\,\mathcal{M}_2,\,\ldots,\,\mathcal{M}_N$ are non-negative and integer valued random variables. If we denote their probability mass functions as
\begin{align}\label{pmf}
\pi_k^{(j)}:=\mathbb{P}(\mathcal{M}_j=k),\quad k\in\mathbb{N}_0,\quad j=1,\,2,\,\ldots,\,N,
\end{align}
then
\begin{align*}
\pi_0^{(j)}=F_{\infty}^{(j)}(0),\quad
\pi_k^{(j)}=F_{\infty}^{(j)}(k)-F_{\infty}^{(j)}(k-1),\quad k\in\mathbb{N},
\end{align*}
for all $j=1,\,2,\,\ldots,\, N$. Also

\begin{align*}
F^{(j)}_{\infty}(x)=\mathbb{P}(\mathcal{M}_j\leqslant x),\quad x\in\mathbb{N}_0, \quad j=1,\,2,\,\ldots,\,N.
\end{align*}

Besides the mentioned Theorem \ref{T2}, we prove identities of the probability generating functions of the random variables $\mathcal{M}_j$ and $X_j$, $j=1,\,2,\,\ldots,\,N$; see Proposition \ref{T1}. For an integer-valued random variable $X$ such that $\mathbb{P}(m\leqslant X<+\infty)=1$ we define its probability generating function by
\begin{align}\label{pgf}
G_X(s):=\sum_{j=m}^{\infty}s^j\mathbb{P}(X=j),
\end{align}
where $|s|\leqslant1,\,s\in\mathbb{C}$, and $s\neq0$ if $m<0$. It is easy to check that
\begin{align*}
\frac{G_X(s)}{1-s}=\sum_{n=m}^{\infty}\mathbb{P}(X\leqslant n)s^n,\quad |s|<1,
\end{align*}
and $s\neq0$ if $m<0$. Proposition 1 provides more information about the random variables $\mathcal{M}_1,\,\mathcal{M}_2,\,\ldots,\,\mathcal{M}_N$ than merely their distribution functions. 

Last but not least, we emphasize that Theorem \ref{T2} provides an explicit recurrent computational pattern of $F^{(j)}_{\infty}(x)$, $j=1,\,2,\,\ldots,\,N$ for $x\geqslant0$ only. Therefore, in Proposition \ref{T3} we state how to find the non-zero values of $F^{(j)}_{\infty}(x)$, $j=1,\,2,\,\ldots,\,N$ if $x<0$. Together with Lemma \ref{L1}, which identifies the required initial values, these results provide a complete recursive procedure for computing the distribution functions $F^{(j)}_{\infty}(x)$ and the associated characteristics of the maximum distributions.

\subsection{Prior work}\label{subsec:prior}

A similar distribution function $F^{(1)}_{\infty}$ has been investigated in \cite{gerve_grigutis_2024}. However, the periodic discrete random variables $X_1,\,X_2,\,\ldots,\,X_N$ were assumed to have identical supports, the algorithm to compute $F^{(1)}_{\infty}$ was less explicit than Theorem \ref{T2} in this work, and the context of occurrence of $F^{(1)}_{\infty}$ was largely motivated by applications in insurance mathematics.

In work \cite{Grigutis2024}, an attempt was made to compute $F^{(1)}_{\infty}$ explicitly. Upon a closer examination, several inaccuracies were identified in that paper. In particular, the statement on \cite[p.~482]{Grigutis2024} containing the phrase “which yields” should be interpreted as “which, if $\max\{S_1,\,S_2,\,\ldots,\,S_N\}=S_N$ almost surely, yields”. Let us explain why. The definition of $F^{(1)}_{\infty}$ and the law of total probability, see \cite[p. 482]{Grigutis2024}, imply 
\begin{align}\label{eq:recurrence}
F^{(1)}_{\infty}(x)=\hspace{-1cm}\sum_{\substack{m_1\leqslant i_1\leqslant x\\m_2\leqslant i_2\leqslant x-i_1\\
\vdots \vspace{1mm} \\
m_N\leqslant i_N\leqslant x-i_1-i_2-\ldots-i_{N-1}}}\hspace{-5mm}\mathbb{P}(X_1=i_1)\mathbb{P}(X_2=i_2)\cdots\mathbb{P}(X_N=i_N)\,
F^{(1)}_{\infty}\left(x-\sum_{j=1}^Ni_j\right),\quad x\in\mathbb{Z}.
\end{align}
Identity \eqref{eq:recurrence} is uncomfortable for the search of $F^{(1)}_{\infty}$. We may rewrite it as follows 
\begin{align*}
F_{\infty}^{(1)}(x)=\mathbb{P}(\max\{S_1,\,S_2,\,\ldots\}\leqslant x)
=\mathbb{P}(\max\{S_1,\,S_2,\,\ldots,\,S_N\}\leqslant x,\,\max\{S_{N+1},\,S_{N+2},\,\ldots\}\leqslant x).
\end{align*}
Due to $S_{N+k}-S_N\dist S_k$ for all $k\in\mathbb{N}$ and denoting $\tilde{S}_1,\,\tilde{S}_2,\,\ldots$ as independent copies of $S_1,\,S_2,\,\ldots$ correspondingly, we obtain
\begin{align*}
F_{\infty}^{(1)}(x)&=\mathbb{P}(\max\{S_1,\,S_2,\,\ldots,\,S_N\}\leqslant x,\,S_N+\max\{\tilde{S}_{1},\,\tilde{S}_{2},\,\ldots\}\leqslant x)\\
&=\sum_{k=D}^{x}\mathbb{P}(\max\{S_1,\,S_2,\,\ldots,\,S_N\}\leqslant x,\,S_N=k,\,\max\{\tilde{S}_{1},\,\tilde{S}_{2},\,\ldots\}\leqslant x-k)\\
&=\sum_{k=D}^{x}\mathbb{P}(\max\{S_1,\,S_2,\,\ldots,\,S_N\}\leqslant x,\,S_N=k)F^{(1)}_{\infty}(x-k).
\end{align*}

Since 
\begin{align*}
\mathbb{P}(S_N=k)=\mathbb{P}(S_N=k,\,\max\{S_1,\,S_2,\,\ldots,\,S_N\}\leqslant x)
+\mathbb{P}(S_N=k,\,\max\{S_1,\,S_2,\,\ldots,\,S_N\}>x)
\end{align*}
we get
\begin{align}\label{eq:recurrence_with_B}
F_{\infty}^{(1)}(x)=\sum_{k=D}^{x}f_N(k)F^{(1)}_{\infty}(x-k)
-\sum_{k=D}^{x}b_k(x)F^{(1)}_{\infty}(x-k)
,\,x\in\mathbb{Z},
\end{align}
where 
\begin{align*}
b_k(x)=\mathbb{P}(\max\{S_1,\,S_2,\,\ldots,\,S_N\}>x,\,S_N=k),\,x,\,k\in\mathbb{Z}.
\end{align*}

Notice that $\mathbb{P}(\max\{S_1,\,S_2,\,\ldots,\,S_N\}=S_N)=1$ implies $b_k(x)=0$ for all $k\leqslant x$ and $x\in\mathbb{Z}$.  Indeed,
\begin{align*}
b_k(x)=\mathbb{P}(\max\{S_1,\,S_2,\,\ldots,\,S_N\}>x,\,S_N=k)\leqslant\mathbb{P}(S_N>x,\,S_N=k)=0,
\end{align*}
for all $k\leqslant x$ and $x\in\mathbb{Z}$.

Unfortunately, the boundary terms $b_k(x)$ were erroneously omitted in \cite{Grigutis2024} for all distributions. However, the paper \cite{Grigutis2024} is credible for $N=1$ or such distributions that imply $b_k(x)=0$ in \eqref{eq:recurrence_with_B}. Except \cite[Example 2]{Grigutis2024}, which we revisit in detail in this work; see Section \ref{sec:Examples}. 

In general, the distribution of the maximum of a random walk remains a central theme in probability theory. Classical contributions include the works of Felix Pollaczek and Aleksandr Khinchin on waiting-time and supremum distributions \cite{Pollaczek1930}, \cite{Khintchine1932}, as well as Dennis Lindley's seminal work on the waiting-time recursion in single-server queues \cite{Lindley1952}. Later, Frank Spitzer developed the fluctuation theory of random walks \cite{Spitzer1976}, while William Feller provided an extensive treatment of random walks and their extrema \cite{Feller1971}. These works laid the foundations for a rich theory connecting random-walk maxima with generating functions, renewal theory, ladder variables, Wiener--Hopf factorization, queueing theory, ruin theory, etc.

Recent developments continue to study the distribution of random-walk maxima and related quantities by using generating-function and recursive techniques; see, for example, \cite{Asmussen2003}, \cite{AsmussenAlbrecher2010}, \cite{Vlasiou2007}, \cite{BoxmaKellaMandjes2023}. In ruin theory, many formulas for discrete-time risk models are recursive; see the survey paper \cite{LiLuGarrido2009} and the references therein. Random-walk techniques also play an important role in probabilistic models arising in analytic number theory, including the study of the extrema of the Riemann zeta function and related logarithmically correlated fields \cite{ArguinBeliusHarper2017}.

The present work contributes to this line of research by studying random walks with periodical distributions. In contrast to the classical setting of independent and identically distributed random variables, we derive computable representations for the distribution functions $F_\infty^{(1)},\,F_\infty^{(2)},\,\ldots,\,F_\infty^{(N)}$ under the periodicity assumption and show that these distribution functions admit representation through a linear recurrence determined by the characteristics of incorporated distributions. In several examples, this recurrence can be solved explicitly. In particular, Section 4 contains a bi-seasonal biased Rademacher model for which the limiting distribution, its probability masses, generating function, and moments admit explicit closed-form expressions.

\section{Main Results}\label{sec:results}
In this section, we present the main results of the paper. Their proofs are given in Section \ref{sec:proofs}. Throughout, we use the notations introduced in Subsection~\ref{sub:notations} and additionally denote the column vector
\begin{align*}
{\pmb G}_{\mathcal{M}}(s):=\left(G_{\mathcal{M}_1}(s),\,G_{\mathcal{M}_2}(s),\,\ldots,\,G_{\mathcal{M}_N}(s)\right)^T,
\end{align*}
the matrix
\begin{align*}
&{\pmb M}(s):=\\
&\begin{pmatrix}\nonumber
1&G_{X_1}(s)&G_{X_1+X_2}(s)&\ldots&G_{X_1+X_2+\ldots+X_{N-1}}(s)\\
G_{X_2+X_3+\ldots+X_N}(s)&1&G_{X_2}(s)&\ldots&
G_{X_2+X_3+\ldots+X_{N-1}}(s)\\
G_{X_3+X_4+\ldots+X_N}(s)&G_{X_1+X_3+X_4+\ldots+X_N}(s)&1&\ldots&G_{X_3+X_4+\ldots+X_{N-1}}(s)\\
\vdots&\vdots&\vdots&\ddots&\vdots\\
G_{X_{N-1}+X_N}(s)&G_{X_1+X_{N-1}+X_N}(s)&G_{X_1+X_2+X_{N-1}+X_N}(s)&\ldots&G_{X_{N-1}}(s)\\
G_{X_N}(s)&G_{X_1+X_N}(s)&G_{X_1+X_2+X_N}(s)&\ldots&1
\end{pmatrix},
\end{align*}
and the column vector
\begin{align*}
{\pmb B}(s):=\left(B_1(s),\,B_2(s),\,\ldots,\,B_N(s)\right)^T,
\end{align*}
where
\begin{align*}
B_j(s):=\sum_{k=m_j}^{-1}s^k\sum_{i=0}^{k-m_j}\pi^{(j+1)}_iF_{X_j}(k-i),\quad j=1,\,2,\,\ldots,\,N,
\end{align*}
and $\pi^{(N+1)}=\pi^{(1)}$. The following proposition establishes a system of generating-function identities satisfied by the random variables $\mathcal{M}_1,\,\mathcal{M}_2,\,\ldots,\,\mathcal{M}_N$.

\begin{proposition}\label{T1}
Assume that $\mathbb{E}S_N<0$. Then, for every $s\in\mathbb{C}$ such that $|s|<1$ and $G_{S_N}(s)
\neq1$, the generating functions satisfy
\begin{align}\label{eq:T1}
\frac{{\pmb G}_{\mathcal{M}}(s)}{1-s}=\frac{{\pmb M}(s){\pmb B}(s)}{G_{S_N}(s)-1}.
\end{align}
\end{proposition}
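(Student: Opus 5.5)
The plan is to derive a cyclic system of $N$ first-order identities linking consecutive generating functions, and then eliminate it by iteration. The starting point is the one-step decomposition
\begin{align*}
\mathcal{M}_j=\max\{0,\,X_j+\mathcal{M}_{j+1}\},\quad j=1,\,2,\,\ldots,\,N,
\end{align*}
where $\mathcal{M}_{N+1}$ is understood as $\mathcal{M}_1$. This relation follows from the periodicity $X_k\dist X_{k+N}$: the variable $\max\{0,X_{N+1},X_{N+1}+X_{N+2},\ldots\}$ has the law of $\mathcal{M}_1$ and is independent of $X_N$. Likewise, $X_j$ is independent of $\mathcal{M}_{j+1}$ for every $j$.

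First I would check that $\mathcal{M}_j<\infty$ almost surely. Grouping the walk into blocks of length $N$ gives i.i.d.\ block sums with negative mean $\mathbb{E}S_N<0$. The strong law of large numbers then gives $\sum_{k=j}^{n}X_k\to-\infty$ almost surely; the bounded within-block fluctuation is harmless because each $X_k$ is a.s.\ finite and there are only $N$ distributions. Consequently all $G_{\mathcal{M}_j}(s)$ are well defined for $|s|<1$.

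Second, for $x\geqslant0$ the decomposition gives
\begin{align*}
F^{(j)}_\infty(x)=\mathbb{P}(X_j+\mathcal{M}_{j+1}\leqslant x).
\end{align*}
The random variable $X_j+\mathcal{M}_{j+1}$ is supported on $[m_j,\infty)$, and by independence its generating function is $G_{X_j}(s)G_{\mathcal{M}_{j+1}}(s)$. Using the identity $G_X(s)/(1-s)=\sum_{n\geqslant m}\mathbb{P}(X\leqslant n)s^n$ stated earlier, I would write
\begin{align*}
\frac{G_{X_j}(s)G_{\mathcal{M}_{j+1}}(s)}{1-s}=\sum_{k=m_j}^{-1}s^k\,\mathbb{P}(X_j+\mathcal{M}_{j+1}\leqslant k)+\sum_{x\geqslant0}s^xF^{(j)}_\infty(x).
\end{align*}
The last sum equals $G_{\mathcal{M}_j}(s)/(1-s)$. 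Conditioning on $\mathcal{M}_{j+1}=i$ shows that the finite negative part is
\begin{align*}
\sum_{k=m_j}^{-1}s^k\sum_{i=0}^{k-m_j}\pi^{(j+1)}_iF_{X_j}(k-i)=B_j(s).
\end{align*}
Writing $g_j(s):=G_{\mathcal{M}_j}(s)/(1-s)$, this yields the cyclic system
\begin{align*}
g_j(s)=G_{X_j}(s)\,g_{j+1}(s)-B_j(s),\qquad g_{N+1}=g_1 .
\end{align*}

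Third, I would iterate this relation $N$ times starting from index $j$. Independence turns products of generating functions into generating functions of sums, so the iteration gives
\begin{align*}
g_j(s)\bigl(1-G_{S_N}(s)\bigr)=-\sum_{r=0}^{N-1}G_{X_j+X_{j+1}+\cdots+X_{j+r-1}}(s)\,B_{j+r}(s).
\end{align*}
Here indices are taken cyclically modulo $N$, the $r=0$ term has coefficient $1$, and $G_{X_j+\cdots+X_{j+N-1}}=G_{S_N}$. Dividing by $G_{S_N}(s)-1\neq0$ gives $g_j(s)=(\pmb M(s)\pmb B(s))_j/(G_{S_N}(s)-1)$, which is the claimed identity.

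The remaining work is to confirm that row $j$ of $\pmb M(s)$ has, in column $j+r$ (cyclically), exactly the entry $G_{X_j+\cdots+X_{j+r-1}}$. Once the wrap-around sums are written with increasing indices this matches the displayed matrix; for example, row $2$, column $1$ is $G_{X_2+\cdots+X_N}$, and row $3$, column $2$ is $G_{X_1+X_3+\cdots+X_N}$.

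I expect the main obstacle to be the bookkeeping in the second step. One must justify rearranging the Laurent series for $|s|<1$ when $m_j<0$; this is safe because only finitely many negative powers occur and $s\neq0$. One must also verify that the negative-index part is exactly $B_j$, in particular that the $i$-range cuts off at $k-m_j$ and that the convention $\pi^{(N+1)}=\pi^{(1)}$ is applied correctly.
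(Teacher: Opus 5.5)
Your proposal is correct and is essentially the paper's proof. Both arguments rest on $\mathcal M_j\dist(X_j+\mathcal M_{j+1})^+$ and reduce it to the same cyclic system $G_{\mathcal M_j}(s)=G_{X_j}(s)G_{\mathcal M_{j+1}}(s)-(1-s)B_j(s)$; the paper solves it by writing $A(s)\pmb G_{\mathcal M}(s)=(s-1)\pmb B(s)$ and inverting $A(s)$, while your $N$-fold back-substitution derives that inverse row by row (the paper only asserts it is "straightforward to verify").

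The remaining differences are cosmetic. You read off $B_j$ directly from the generating function of the distribution function of $X_j+\mathcal M_{j+1}$, whereas the paper computes $\mathbb E s^{(i+X_j)^+}$ and then resums. In your finiteness remark, the within-block fluctuation is controlled by $X_k\geqslant m_k$, not by a.s. finiteness alone: each partial sum exceeds the next block-end sum by at most a deterministic constant.
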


By comparing the coefficients of equal powers of $s$ in \eqref{eq:T1}, we obtain the main theorem of the paper, which provides an explicit computational pattern for the values $F_{\infty}^{(j)}$. Since the random variables $X_1,\,X_2,\,\ldots$ are integer-valued, in the sequel, we write $F_{\infty}^{(j)}(n)$ instead of $F_{\infty}^{(j)}(x)$ whenever the argument is an integer.

\begin{thm}\label{T2}
If $\mathbb{E}S_N<0$, then the distribution functions $F_{\infty}^{(1)},\,F_{\infty}^{(2)},\,\ldots,\,F_{\infty}^{(N)}$ satisfy
\begin{align*}
F_\infty^{(j)}(n)
=
\sum_{r=1}^{N}
\sum_{\ell=m_r}^{-1}
a_{n-\ell}^{I_{jr}}
\sum_{i=0}^{\ell-m_r}
\pi_i^{(r+1)}
F_{X_r}(\ell-i),
\qquad
j=1,\,2,\,\ldots,\,N,\quad n\in\mathbb{N}_0,
\end{align*}
where
\begin{align*}
I_{jr}:=
\begin{cases}
(j,j+1,\ldots,r-1), & j<r,\\
\varnothing, & j=r,\\
(1,\,\ldots,\,r-1,\,j,\,j+1,\,\ldots,\,N), & j>r,
\end{cases}
\end{align*}

\begin{align*}
a_n^{I_{jr}}=
\begin{cases}
\sum\limits_{k=\mu_{jr}}^{n+D}
p_k^{I_{jr}}
a_{n-k}, & n\geqslant \mu_{jr}-D, \\
0, & n < \mu_{jr}-D,
\end{cases}
\qquad
\mu_{jr}=\sum_{t\in I_{jr}}m_t,
\end{align*}
and 
\begin{align*}
p_k^{I_{jr}}
=
\mathbb{P}\!\left(
\sum_{t\in I_{jr}} X_t = k
\right),
\qquad
k\in\mathbb{Z},
\end{align*}
with the convention that, if $I_{jr}=\varnothing$, then
\begin{align*}
a_n^{\varnothing}=a_n,\qquad
\mu_{jr}=0,
\qquad
p_k^{\varnothing}
=
\begin{cases}
1, & k=0,\\
0, & k\neq 0,
\end{cases}
\end{align*}
where
\begin{align*}
a_{-D}=\frac{1}{f_N(D)}, \quad a_{n}=\frac{1}{f_N(D)}\left({\pmb 1}_{\{n\geqslant-2D\}}a_{n+D}-\sum_{k=1}^{n+D}f_N(k+D)a_{n-k}\right),\quad n \geqslant -D+1,
\end{align*}
and $\pi^{(N+1)}=\pi^{(1)}$.
\end{thm}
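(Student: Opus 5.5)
Following the remark after Proposition~\ref{T1}, I will extract Theorem~\ref{T2} from \eqref{eq:T1} by comparing coefficients of $s^n$ for $n\in\mathbb{N}_0$. The $j$-th component of the left-hand side is $G_{\mathcal{M}_j}(s)/(1-s)=\sum_{n\geqslant0}F^{(j)}_\infty(n)s^n$. On the right-hand side, the $j$-th component is $\sum_{r=1}^N M_{jr}(s)B_r(s)/(G_{S_N}(s)-1)$.

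\textbf{Step 1: identify the matrix entries.} Reading off ${\pmb M}(s)$, one checks that $M_{jr}(s)=G_{\sum_{t\in I_{jr}}X_t}(s)$, where $G$ of the empty sum is $1$. For $j<r$ the entry is $G_{X_j+\cdots+X_{r-1}}$. For $j>r$ it is $G_{X_1+\cdots+X_{r-1}+X_j+\cdots+X_N}$, which is the cyclic sum from $j$ to $r-1$ written with the indices sorted. Hence $M_{jr}(s)=\sum_{k\geqslant\mu_{jr}}p^{I_{jr}}_ks^k$.

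\textbf{Step 2: expand $1/(G_{S_N}(s)-1)$ as a Laurent series.} Write $G_{S_N}(s)-1=s^D\bigl(f_N(D)+\sum_{k\geqslant1}f_N(D+k)s^k-s^{-D}\bigr)$. Since $-D>0$, the bracket is a power series with nonzero constant term $f_N(D)$. Its reciprocal is therefore a power series, so $1/(G_{S_N}(s)-1)=\sum_{n\geqslant -D}a_ns^n$, at least formally and near $0$.

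The coefficients come from the identity $\bigl(\sum_k f_N(k)s^k-1\bigr)\sum_n a_ns^n=1$. Comparing coefficients of $s^{n+D}$ gives $\sum_{k\geqslant0}f_N(k+D)a_{n-k}-\mathbf 1\{\cdot\}a_{n+D}=\mathbf 1\{n+D=0\}$, where the indicator records whether $a_{n+D}$ lies in the range $n+D\geqslant-D$. Solving this for $a_n$ yields exactly $a_{-D}=1/f_N(D)$ and the stated recursion for $n\geqslant-D+1$.

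\textbf{Step 3: multiply out.} The product $M_{jr}(s)/(G_{S_N}(s)-1)$ has coefficients $a^{I_{jr}}_n=\sum_k p^{I_{jr}}_ka_{n-k}$. These vanish for $n<\mu_{jr}-D$, and the upper summation limit $k\leqslant n+D$ comes from $a_{n-k}=0$ for $n-k<-D$. Then $B_r(s)=\sum_{\ell=m_r}^{-1}s^\ell c_{r,\ell}$, with $c_{r,\ell}=\sum_{i=0}^{\ell-m_r}\pi_i^{(r+1)}F_{X_r}(\ell-i)$, is a finite Laurent polynomial. The coefficient of $s^n$ in the product is therefore $\sum_{\ell=m_r}^{-1}a^{I_{jr}}_{n-\ell}c_{r,\ell}$. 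Summing over $r$ and equating with $F^{(j)}_\infty(n)$ for $n\geqslant0$ gives the theorem.

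\textbf{Main obstacle.} The delicate point is justifying the coefficient comparison. The identity \eqref{eq:T1} holds for $|s|<1$ with $G_{S_N}(s)\neq1$, whereas the Laurent expansion of $1/(G_{S_N}-1)$ around $0$ converges only in a small punctured disc $0<|s|<\rho$. In that disc all $G$'s are analytic, apart from the finitely many negative powers coming from $s^{m}$, and $G_{S_N}(s)-1\sim f_N(D)s^D$ is nonzero. I would restrict both sides of \eqref{eq:T1} to $0<|s|<\rho$, where each side is a convergent Laurent series, and invoke uniqueness of Laurent coefficients. Only indices $n\geqslant0$ are claimed; the negative-index coefficients on the right must cancel, consistent with the left-hand side. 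I would also check the index bookkeeping for $I_{jr}$ when $j>r$, where the sum wraps past $N$, against the cyclic structure of ${\pmb M}(s)$, with $\pi^{(N+1)}=\pi^{(1)}$.
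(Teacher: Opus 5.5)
Your proposal is correct and follows essentially the same route as the paper. Both expand $1/(G_{S_N}(s)-1)=\sum_{n\geqslant -D}a_ns^n$, derive the recursion for $a_n$ by matching coefficients, convolve with the entries $G_{\sum_{t\in I_{jr}}X_t}(s)$ of ${\pmb M}(s)$ to get $a_n^{I_{jr}}$, and read off the coefficient of $s^n$, $n\in\mathbb{N}_0$, in \eqref{eq:T1}. Two of your points are more careful than the paper's own text: the justification via uniqueness of Laurent coefficients in a small punctured disc, and the remark that negative powers on the right-hand side need only cancel in total, whereas the paper asserts that the right-hand side contains only nonnegative powers of $s$.
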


Theorem \ref{T1} implies the following corollary. 
\begin{cor}\label{C1}
Say that the assumptions of Theorem \ref{T1} hold and $N=1$.
Then
\begin{align*}
F_\infty^{(1)}(n)
=
\sum_{\ell=m_1}^{-1}
a_{n-\ell}
\sum_{i=0}^{\ell-m_1}
\pi_i^{(1)}
F_{X_1}(\ell-i),
\qquad n\in\mathbb N_0,
\end{align*}
where
\begin{align*}
a_{-m_1}=\frac{1}{p^{(1)}_{m_1}},\quad
a_n
=
\frac{1}{p^{(1)}_{m_1}}
\left(
{\pmb 1}_{\{n\geqslant-2m_1\}}a_{n+m_1}
-
\sum_{k=1}^{n+m_1}
p^{(1)}_{k+m_1}\,
a_{n-k}
\right),
\qquad n\geqslant -m_1+1.
\end{align*}
\end{cor}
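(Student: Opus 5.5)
The corollary is the special case $N=1$ of Theorem \ref{T2}, so the plan is to specialise every quantity in that theorem rather than reprove anything.

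\textbf{Step 1: what $N=1$ does to the data.} With $N=1$ there is a single random variable and:
\begin{align*}
D=m_1,\qquad S_N=X_1,\qquad f_N(k)=p^{(1)}_k.
\end{align*}
The index sets also collapse. Both $j$ and $r$ range only over $\{1\}$, so the only index set that occurs is $I_{11}=\varnothing$, and the outer sum over $r$ has exactly one term. By the convention in Theorem \ref{T2}:
\begin{align*}
a_n^{\varnothing}=a_n,\qquad \mu_{11}=0.
\end{align*}
Finally, $\pi^{(N+1)}=\pi^{(2)}=\pi^{(1)}$.

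\textbf{Step 2: substitute into the main formula.} Putting these into the formula of Theorem \ref{T2} for $F^{(1)}_\infty(n)$ gives
\begin{align*}
\sum_{\ell=m_1}^{-1}a_{n-\ell}\sum_{i=0}^{\ell-m_1}\pi^{(1)}_iF_{X_1}(\ell-i),
\end{align*}
which is exactly the claimed expression.

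\textbf{Step 3: check the recurrence for $a_n$.} In Theorem \ref{T2}, replace $f_N(D)$ by $p^{(1)}_{m_1}$ and $f_N(k+D)$ by $p^{(1)}_{k+m_1}$. This turns the initial value $a_{-D}$ into $a_{-m_1}=1/p^{(1)}_{m_1}$. The indicator $\mathbf 1_{\{n\geqslant -2D\}}$ becomes $\mathbf 1_{\{n\geqslant -2m_1\}}$, and the upper summation limit $n+D$ becomes $n+m_1$. This is the stated recurrence. The division by $p^{(1)}_{m_1}$ is legitimate because $m_1$ is the start of the support, so $p^{(1)}_{m_1}>0$.

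\textbf{Step 4: the hypothesis.} The corollary refers to "the assumptions of Theorem \ref{T1}". I read this as the hypothesis of Theorem \ref{T2}, namely $\mathbb E S_N<0$, which for $N=1$ is $\mathbb E X_1<0$.

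The argument is routine bookkeeping, and I expect no real obstacle. The only point needing care is checking that the $\varnothing$ conventions ($\mu=0$, $p^{\varnothing}_k=\delta_{k0}$) indeed make $a^{\varnothing}_n$ coincide with $a_n$ for all $n$, including the range where $a_n$ vanishes.
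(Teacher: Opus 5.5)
Your proposal is correct and takes the same approach as the paper. The paper gives no separate proof and treats the corollary as the case $N=1$ of Theorem \ref{T2}, with $D=m_1$, $f_N=p^{(1)}$, $I_{11}=\varnothing$ and $\pi^{(2)}=\pi^{(1)}$, which is exactly your substitution. Your reading of the hypothesis is harmless, since Proposition \ref{T1} and Theorem \ref{T2} both assume only $\mathbb{E}S_N<0$.
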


The next statement provides an algorithm to compute $F^{(1)}_{\infty}(n),\,F^{(2)}_{\infty}(n),\,\ldots,\,F^{(N)}_{\infty}(n)$ when $n$ is not necessary non-negative.

\begin{proposition}\label{T3}
The distribution functions $F^{(1)}_{\infty},\,F^{(2)}_{\infty},\,\ldots,\,F^{(N)}_{\infty}$ satisfy
\begin{align}\label{eq1_T3}
F_{\infty}^{(j)}(n)
=
\sum_{k=m_j}^{n}
p_k^{(j)}
F_{\infty}^{(j+1)}(n-k),\quad j=1,\,2,\,\ldots,\,N, \quad n\geqslant M_j,
\end{align}
where $M_j=\max\{m_j,\,m_j+m_{j+1},\,\ldots,\,m_j+m_{j+1}+\ldots+m_{j+N-1}\}$, $m_{i+N}=m_{i}$ for all $i\in\mathbb{N}$, and $F_{\infty}^{(N+1)}=F_{\infty}^{(1)}$.

Also
\begin{align}\label{eq2_T3}
F_\infty^{(j)}(n+m_j)
=
\sum_{k=0}^{n}
\pi_k^{(j+1)}
F_{X_j}(m_j+n-k),
\quad
j=1,\,2,\,\ldots,\,N,
\qquad
n\geqslant0,
\end{align}
where $\pi^{(N+1)}=\pi^{(1)}$.
\end{proposition}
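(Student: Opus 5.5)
We prove Proposition \ref{T3} by a first-step decomposition of the supremum, conditioning on $X_j$ and using independence together with the $N$-periodicity in distribution. Write
\begin{align*}
\sup_{n\geqslant j}\sum_{k=j}^{n}X_k=X_j+\max\Bigl\{0,\,\sup_{n\geqslant j+1}\sum_{k=j+1}^{n}X_k\Bigr\}=X_j+\mathcal{M}_{j+1}.
\end{align*}
Here $\mathcal{M}_{j+1}$ is built from $X_{j+1},X_{j+2},\ldots$, so it is independent of $X_j$. By periodicity in distribution, $\mathcal{M}_{N+1}\dist\mathcal{M}_1$, which is why the convention $\pi^{(N+1)}=\pi^{(1)}$ applies. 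Consequently $F^{(j)}_\infty(x)=\mathbb{P}(X_j+\mathcal{M}_{j+1}\leqslant x)$ for all integers $x$.

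For \eqref{eq1_T3}, condition on $X_j=k$:
\begin{align*}
F^{(j)}_\infty(n)=\sum_{k\geqslant m_j}p^{(j)}_k\,\mathbb{P}(\mathcal{M}_{j+1}\leqslant n-k).
\end{align*}
We must identify $\mathbb{P}(\mathcal{M}_{j+1}\leqslant n-k)$ with $F^{(j+1)}_\infty(n-k)$. These agree when $n-k\geqslant0$, since then the event $\{\max(0,Y)\leqslant n-k\}$ equals $\{Y\leqslant n-k\}$. When $n-k<0$ the left side is $0$, while $F^{(j+1)}_\infty(n-k)$ may be positive. This is the only delicate point, and the restriction $n\geqslant M_j$ does not make it disappear. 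I would therefore read \eqref{eq1_T3} as the sum over $k$ with $n-k\geqslant 0$ (the displayed upper limit $k\leqslant n$), and check that terms with $k\leqslant n$ automatically have $n-k\geqslant0$.

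This also explains why the sum truncates at $k=n$: the terms with $k>n$ vanish because $\mathcal{M}_{j+1}\geqslant0$. The condition $n\geqslant M_j$ only marks where the left side can be nonzero. The identity holds trivially, as $0=0$, below $M_j$ if $M_j\leqslant n$ fails. It is worth noting $M_j\geqslant m_j$, which guarantees the range $k\in[m_j,n]$ is consistent.

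For \eqref{eq2_T3}, apply the same decomposition but condition on $\mathcal{M}_{j+1}=i$ instead:
\begin{align*}
F^{(j)}_\infty(n+m_j)=\sum_{i\geqslant0}\pi^{(j+1)}_i\,\mathbb{P}(X_j\leqslant n+m_j-i)=\sum_{i\geqslant0}\pi^{(j+1)}_i F_{X_j}(m_j+n-i).
\end{align*}
Since $F_{X_j}(y)=0$ for $y<m_j$, only the terms $i\leqslant n$ survive, which gives exactly \eqref{eq2_T3} for every $n\geqslant0$. No finiteness issues arise because $\mathcal{M}_{j+1}<\infty$ almost surely under $\mathbb{E}S_N<0$, which also ensures the $\pi^{(j+1)}$ sum to one.

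The main obstacle is therefore the careful bookkeeping in \eqref{eq1_T3} between $F^{(j+1)}_\infty$ at negative arguments and the distribution of $\mathcal{M}_{j+1}$. The second identity, which is the one actually used to produce the negative-argument values, is straightforward.
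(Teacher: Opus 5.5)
Your proof is correct and follows essentially the paper's route: first-step conditioning on $X_j$ for \eqref{eq1_T3} and on $\mathcal{M}_{j+1}$ for \eqref{eq2_T3}, using independence and periodicity in distribution. You use $\mathcal{M}_{j+1}\geqslant 0$ to cut the first sum at $k\leqslant n$, which is more careful than the paper's displayed computation: that computation drops the event $\{X_j\leqslant n\}$ and sums over all $k\geqslant m_j$. The stated upper limit $k\leqslant n$ is exactly the correct cut-off, so your concern about negative arguments is already handled and \eqref{eq1_T3} needs no reinterpretation.
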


Proposition \ref{T1} and Theorem \ref{T2} depend on the initial probabilities
\begin{align}\label{unknowns}
\pi^{(j)}_0,\,\pi^{(j)}_1,\,\ldots,\,\pi^{(j)}_{-m_j-1}, \quad j=1,\,2,\,\ldots,\,N. 
\end{align}
We next discuss how to find these values. If $b_k(x)=0$ for all $x$ and $k$ in \eqref{eq:recurrence_with_B}, then these initial values can be explicitly computed by eqs. (1.10)--(1.13) from \cite{Grigutis2024}. If $b_k(x)\neq0$ at least for some $x$ and $k$ in \eqref{eq:recurrence_with_B}, then the next lemma is a key statement for finding the required initial probabilities.

\begin{lem}\label{L1}
If $\mathbb{E}S_N<0$ and $s\in\mathbb{C}$ is such that $|s|\leqslant 1$ and $s\neq0$, then

\begin{align}\label{eq:memory_compact}\nonumber
&(1-s)\sum_{j=1}^{N}
s^{m_j}
G_{X_N+X_1+\cdots+X_{j-1}}(s)
\sum_{i=0}^{-m_j-1}\pi_i^{(j+1)}
\sum_{r=0}^{-m_j-1-i}
s^{r+i}F_{X_j}(m_j+r)\\
&=
G_{\mathcal M_N}(s)\left(G_{S_N}(s)-1\right),
\end{align}
where
\begin{align*}
G_{X_N+X_1+\cdots+X_{0}}(s):=1 \quad \text{and} \quad \pi^{(N+1)}=\pi^{(1)}.
\end{align*}
Moreover,
\begin{align}\label{eq:memory_compact_E}
\sum_{j=1}^{N}
\sum_{i=0}^{-m_j-1}
\pi_i^{(j+1)}
\sum_{r=0}^{-m_j-1-i}
F_{X_j}(m_j+r)
=
-\mathbb{E}S_N.
\end{align}
\end{lem}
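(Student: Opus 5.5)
Our plan is to derive the identity from the one-step relations of Proposition \ref{T3}, written in generating-function form, and then chain them around one period. Fix $j$. By \eqref{eq1_T3}, $\mathcal M_j \dist \max\{0,\,X_j+\mathcal M_{j+1}\}$ with $X_j$ independent of $\mathcal M_{j+1}$. Hence $\mathbb P(X_j+\mathcal M_{j+1}\leqslant n)=F^{(j)}_\infty(n)$ for $n\geqslant 0$. For $n<0$ we have $\mathbb P(X_j+\mathcal M_{j+1}\leqslant n)=\sum_{i\geqslant0}\pi^{(j+1)}_iF_{X_j}(n-i)$; this vanishes for $n<m_j$, and for $m_j\leqslant n\leqslant -1$ it equals $\sum_{i=0}^{n-m_j}\pi_i^{(j+1)}F_{X_j}(n-i)$. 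Multiplying by $s^n$ and summing over $n\geqslant m_j$, as in the identity $G_X(s)/(1-s)=\sum_n\mathbb P(X\leqslant n)s^n$, gives for $0<|s|<1$
\begin{align*}
G_{\mathcal M_j}(s)=G_{X_j}(s)\,G_{\mathcal M_{j+1}}(s)-(1-s)\,C_j(s),\qquad C_j(s):=\sum_{n=m_j}^{-1}s^n\sum_{i=0}^{n-m_j}\pi_i^{(j+1)}F_{X_j}(n-i).
\end{align*}
Substituting $n=m_j+r+i$ turns $C_j(s)$ into $s^{m_j}\sum_{i=0}^{-m_j-1}\pi_i^{(j+1)}\sum_{r=0}^{-m_j-1-i}s^{r+i}F_{X_j}(m_j+r)$. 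This is exactly the inner expression in \eqref{eq:memory_compact}, and $C_j$ coincides with $B_j$.

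Next we iterate the relation starting from $j=N$. Write $G_{\mathcal M_N}=G_{X_N}G_{\mathcal M_1}-(1-s)C_N$, then substitute $G_{\mathcal M_1}=G_{X_1}G_{\mathcal M_2}-(1-s)C_1$, and continue until $G_{\mathcal M_N}$ reappears. Independence gives $G_{X_N}G_{X_1}\cdots G_{X_{j-1}}=G_{X_N+X_1+\cdots+X_{j-1}}$. After one full cycle we obtain
\begin{align*}
G_{\mathcal M_N}(s)=G_{S_N}(s)G_{\mathcal M_N}(s)-(1-s)\sum_{j=1}^N G_{X_N+X_1+\cdots+X_{j-1}}(s)\,C_j(s).
\end{align*}
The term for $j=N$ carries the empty product, which is consistent with the convention $G_{X_N+X_1+\cdots+X_0}:=1$. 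Rearranging gives \eqref{eq:memory_compact} for $0<|s|<1$.

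To extend to $|s|=1$, $s\neq0$, we argue by continuity. Each $C_j$ is a Laurent polynomial, so it is continuous on $|s|=1$. All generating functions of nonnegative or lower-bounded variables with finite mass are continuous on the closed disc by Abel's theorem and dominated convergence. Both sides therefore extend continuously and the identity persists.

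For \eqref{eq:memory_compact_E}, we divide \eqref{eq:memory_compact} by $1-s$ and let $s\to1^-$. The left side tends to
\[
\sum_j\sum_i\pi_i^{(j+1)}\sum_rF_{X_j}(m_j+r),
\]
because every factor $s^{\cdot}$ and every $G$ tends to $1$. On the right, $G_{\mathcal M_N}(s)\to1$ and $(G_{S_N}(s)-1)/(1-s)\to-G_{S_N}'(1)=-\mathbb ES_N$. This last limit uses that $\mathbb E S_N$ exists and is finite, which is implicit in the hypothesis $\mathbb ES_N<0$ (the mean is well defined since $S_N\geqslant D$). The step needing the most care is this boundary behaviour: justifying the limit of the difference quotient via monotone convergence of $\sum_k f_N(k)(1-s^k)/(1-s)$ for $k\geqslant0$, with the finitely many negative $k$ handled directly. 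We also need $\mathbb P(\mathcal M_N<\infty)=1$, which follows from $\mathbb ES_N<0$ by the strong law, so that $G_{\mathcal M_N}(1)=1$.
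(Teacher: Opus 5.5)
Your proof is correct and is essentially the paper's argument: chaining the one-step relations $G_{\mathcal M_j}=G_{X_j}G_{\mathcal M_{j+1}}-(1-s)B_j$ once around the cycle, starting from $j=N$, is exactly the elimination the paper performs by multiplying \eqref{matrix_identity} by the row vector $(G_{X_N},\,G_{X_N+X_1},\,\ldots,\,G_{X_N+\cdots+X_{N-2}},\,1)$, and your chaining correctly gives the factor $G_{X_N}$ on the $j=1$ term and the factor $1$ on the $j=N$ term, matching that vector. The only differences are minor: you re-derive the one-step relation through the series $\sum_n\mathbb{P}(X_j+\mathcal M_{j+1}\leqslant n)s^n$ rather than through conditional expectations, and for \eqref{eq:memory_compact_E} you divide by $1-s$ and let $s\to1^-$ instead of differentiating, which has the small advantage of never needing control of $G_{\mathcal M_N}'(s)$ near $s=1$.
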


The use of Lemma \ref{L1} is as follows. As $G_{\mathcal M_N}(s)$ is the probability generating function of the finite random variable $\mathcal{M}_N$, the right hand side of \eqref{eq:memory_compact} vanishes if $s$ is such that $G_{S_N}(s)=1$. There are exactly $-D-1$, $-D=-m_1-m_2-\ldots-m_N>0$ roots of $G_{S_N}(s)=1$ counted with their multiplicities in $|s|\leqslant1$, $s\notin\{0,\,1\}$; see Lemma 4 in \cite{gerve_grigutis_2024}. Evaluating \eqref{eq:memory_compact} at the zeros $\alpha_1,\,\alpha_2\,\ldots,\,\alpha_{-D-1}$ of $G_{S_N}(s)-1$, appending \eqref{eq:memory_compact_E}, and collecting coefficients of the unknown initial probabilities \eqref{unknowns}, yields the following $(-D)\times (-D)$ system of linear equations
\begin{align}\label{main_system}
{\pmb A}
{\pmb \pi}
=
{\pmb E},
\end{align}
where 
\begin{align*}
{\pmb \pi}
&=\left(\pi^{(1)}_0,\,
\pi^{(1)}_1,\,
\ldots,\,
\pi^{(1)}_{-m_N-1},\,
\pi^{(2)}_0,\,
\pi^{(2)}_1,\,
\ldots,\,
\pi^{(2)}_{-m_1-1},\,
\ldots,\,
\pi^{(N)}_0,\,
\pi^{(N)}_1,\,
\ldots,\,
\pi^{(N)}_{-m_{N-1}-1}\right)^T,\\
{\pmb E}
&=\left(0,\,0,\,\ldots,\,0,\, -\mathbb{E}S_N\right)^T_{(-D)\times 1},
\end{align*}
and the system's matrix ${\pmb A}$ is built as follows: 
\begin{align*}
{\pmb A}=\left(
A_1\circ B_1,\,A_2\circ B_2,\,\ldots,\,A_N\circ B_{N}\right),
\end{align*}

where $\circ$ denotes the Hadamard matrix product and, with $C:=-D>0$,
\begin{align*}
&\left(A_1\right)_{(-D)\times (-m_N)}:=\\
&\hspace{-1cm}\begin{pmatrix}
\sum\limits_{j=0}^{-m_N-1}\al_1^jF_{X_N}(m_N+j)
&\sum\limits_{j=0}^{-m_N-2}\al_1^{j+1}F_{X_N}(m_N+j)
&\ldots
&\sum\limits_{j=0}^{1}\al_1^{j-m_N-2}F_{X_N}(m_N+j)
&\al_1^{-m_N-1}p^{(N)}_{m_N}\\
\vdots&\vdots&\ddots&\vdots&\vdots\\
\sum\limits_{j=0}^{-m_N-1}\al_{C-1}^jF_{X_N}(m_N+j)
&\sum\limits_{j=0}^{-m_N-2}\al_{C-1}^{j+1}F_{X_N}(m_N+j)
&\ldots
&\sum\limits_{j=0}^{1}\al_{C-1}^{j-m_N-2}F_{X_N}(m_N+j)
&\al_{C-1}^{-m_N-1}p^{(N)}_{m_N}\\
\sum\limits_{j=0}^{-m_N-1}F_{X_N}(m_N+j)
&\sum\limits_{j=0}^{-m_N-2}F_{X_N}(m_N+j)
&\ldots
&\sum\limits_{j=0}^{1}F_{X_N}(m_N+j)
&p^{(N)}_{m_N}
\end{pmatrix},
\end{align*}
\begin{align*}
&\left(A_2\right)_{(-D)\times (-m_1)}:=\\
&\hspace{-1cm}\begin{pmatrix}
\sum\limits_{j=0}^{-m_1-1}\al_1^jF_{X_1}(m_1+j)
&\sum\limits_{j=0}^{-m_1-2}\al_1^{j+1}F_{X_1}(m_1+j)
&\ldots
&\sum\limits_{j=0}^{1}\al_1^{j-m_1-2}F_{X_1}(m_1+j)
&\al_1^{-m_1-1}p^{(1)}_{m_1}\\
\vdots&\vdots&\ddots&\vdots&\vdots\\
\sum\limits_{j=0}^{-m_1-1}\al_{C-1}^jF_{X_1}(m_1+j)
&\sum\limits_{j=0}^{-m_1-2}\al_{C-1}^{j+1}F_{X_1}(m_1+j)
&\ldots
&\sum\limits_{j=0}^{1}\al_{C-1}^{j-m_1-2}F_{X_1}(m_1+j)
&\al_{C-1}^{-m_1-1}p^{(1)}_{m_1}\\
\sum\limits_{j=0}^{-m_1-1}F_{X_1}(m_1+j)
&\sum\limits_{j=0}^{-m_1-2}F_{X_1}(m_1+j)
&\ldots
&\sum\limits_{j=0}^{1}F_{X_1}(m_1+j)
&p^{(1)}_{m_1}
\end{pmatrix},
\end{align*}
$$
\vdots
$$
\begin{align*}
&\left(A_N\right)_{(-D)\times (-m_{N-1})}:=\\
&\hspace{-1cm}\begin{pmatrix}
\sum\limits_{j=0}^{-m_{N-1}-1}\al_1^jF_{X_{N-1}}(m_{N-1}+j)
&\sum\limits_{j=0}^{-m_{N-1}-2}\al_1^{j+1}F_{X_{N-1}}(m_{N-1}+j)
&\ldots
&\al_1^{-m_{N-1}-1}p^{(N-1)}_{m_{N-1}}\\
\vdots&\vdots&\ddots&\vdots\\
\sum\limits_{j=0}^{-m_{N-1}-1}\al_{C-1}^jF_{X_{N-1}}(m_{N-1}+j)
&\sum\limits_{j=0}^{-m_{N-1}-2}\al_{C-1}^{j+1}F_{X_{N-1}}(m_{N-1}+j)
&\ldots
&\al_{C-1}^{-m_{N-1}-1}p^{(N-1)}_{m_{N-1}}\\
\sum\limits_{j=0}^{-m_{N-1}-1}F_{X_{N-1}}(m_{N-1}+j)
&\sum\limits_{j=0}^{-m_{N-1}-2}F_{X_{N-1}}(m_{N-1}+j)
&\ldots
&p^{(N-1)}_{m_{N-1}}
\end{pmatrix},
\end{align*}

\begin{align*}
\left(B_1\right)_{(-D)\times (-m_N)}&:=
\begin{pmatrix}
\al_1^{m_N}&\ldots&\al_1^{m_N}\\
\vdots&\ddots&\vdots\\
\al_{D-1}^{m_N}&\ldots&\al_{D-1}^{m_N}\\
1&\ldots&1
\end{pmatrix},\\
\left(B_2\right)_{(-D)\times (-m_1)}&:=
\begin{pmatrix}
\al_1^{m_1}G_{X_N}(\al_1)&\ldots&\al_1^{m_1}G_{X_N}(\al_1)\\
\vdots&\ddots&\vdots\\
\al_{D-1}^{m_1}G_{X_N}(\al_{D-1})&\ldots&\al_{D-1}^{m_1}G_{X_N}(\al_{D-1})\\\\
1&\ldots&1
\end{pmatrix},\\
\left(B_3\right)_{(-D)\times (-m_2)}&:=
\begin{pmatrix}
\al_1^{m_2}G_{X_N+X_1}(\al_1)&\ldots&\al_1^{m_2}G_{X_N+X_1}(\al_1)\\
\vdots&\ddots&\vdots\\
\al_{D-1}^{m_2}G_{X_N+X_1}(\al_{D-1})&\ldots&\al_{D-1}^{m_2}G_{X_N+X_1}(\al_{D-1})\\
1&\ldots&1
\end{pmatrix},\,\ldots,\\
\left(B_N\right)_{(-D)\times (-m_{N-1})}&:=
\begin{pmatrix}
\al_1^{m_{N-1}}G_{X_N+X_1+\ldots+X_{N-2}}(\al_1)&\ldots&\al_1^{m_{N-1}}G_{X_N+X_1+\ldots+X_{N-2}}(\al_1)\\
\vdots&\ddots&\vdots\\
\al_{D-1}^{m_{N-1}}G_{X_N+X_1+\ldots+X_{N-2}}(\al_{D-1})&\ldots&\al_{D-1}^{m_{N-1}}G_{X_N+X_1+\ldots+X_{N-2}}(\al_{D-1})\\
1&\ldots&1
\end{pmatrix}.
\end{align*}

{\sc Note 1.} {\it In case $G_{S_N}(s)=1$ has multiple roots in $|s|\leqslant1$, $s\neq1$, we augment the system \eqref{main_system} by including derivatives of \eqref{eq:memory_compact} evaluated at these roots up to one order less than their multiplicities.}

The system \eqref{main_system} determines the unknown initial probabilities provided its coefficient matrix is non-singular. Extensive numerical experiments suggest that this is always the case whenever $\mathbb{E}S_N<0$. 

In the special case when $b_k(x)=0$ for all $k\leqslant x$, $x\in\mathbb{Z}$ in \eqref{eq:recurrence_with_B}, the coefficient matrix in \eqref{main_system} reduces to a Vandermonde matrix. Consequently, its determinant admits the classical product representation in terms of the differences of the zeros of $G_{S_N}(s)-1$ and is therefore nonzero whenever these zeros are distinct. This leads to the explicit solution obtained in \cite{Grigutis2024}. However, a general non-singularity proof when $b_k(x)\neq0$, $k\leqslant x$, $x\in\mathbb{Z}$ in \eqref{eq:recurrence_with_B} remains elusive.

\begin{conjecture}
The coefficient matrix in \eqref{main_system} is non-singular whenever $\mathbb{E}S_N<0$.
\end{conjecture}

The coefficient matrix \eqref{main_system} may be viewed as a generalized Vandermonde-type matrix, as its rows are obtained by evaluating certain functions at the zeros of $G_{S_N}(s)-1$. This suggests that the non-singularity question may be related to interpolation or algebraic independence properties of these zeros.

In this work, we do not consider the distribution functions
$F_{\infty}^{(1)},\,F_{\infty}^{(2)},\,\ldots,\,F_{\infty}^{(N)}$
when $\mathbb{E}S_N\geqslant0$. It is known that then these functions are identically zero with some exceptions for the degenerate case $\mathbb{P}(S_N=0)=1$; see \cite[Lem.~4]{Grigutis2024}. 

Although Theorem \ref{T2} in this work provides a general computational procedure for the distribution functions $F_{\infty}^{(1)},\,F_{\infty}^{(2)},\,\ldots,\,F_{\infty}^{(N)}$, Proposition \ref{T1} may yield completely explicit distributions and moment formulas when the generating functions are rational and admit a favorable factorization; see Section \ref{sec:Examples}.

\section{Proofs}\label{sec:proofs}
In this section,  we prove all statements formulated in Section \ref{sec:results}.

\begin{proof}{of Proposition \ref{T1}.}
Let $a^+=\max\{0,\,a\},\,a\in\mathbb{R}$ be the positive part function. Then, for the random variables $\mathcal{M}_1,\,\mathcal{M}_2,\,\ldots,\,\mathcal{M}_N$ it holds that 
\begin{align}\label{syst:dist_eq}
(\mathcal M_{j+1}+X_j)^+
&\dist
\max\Bigl\{0,\,
X_j,\,
X_j+X_{j+1},\,
X_j+X_{j+1}+X_{j+2},\,
\ldots
\Bigr\}
\dist
\mathcal M_j,
\quad j=1,\,2,\,\ldots,\,N,
\end{align}
where $\mathcal{M}_{N+1}\dist\mathcal{M}_1$.

Identities \eqref{syst:dist_eq} imply

\begin{align}\label{syst:gf_eq}
G_{(\mathcal M_{j+1}+X_j)^+}(s)
=
G_{\mathcal M_j}(s),
\qquad j=1,\,2,\,\ldots,\,N.
\end{align}

From \eqref{syst:gf_eq}, for each $j=1,\,2,\,\ldots,\,N$, we obtain

\begin{align}\label{rearangements} \nonumber 
&G_{\mathcal{M}_j}(s)
=G_{(\mathcal{M}_{j+1}+X_j)^+}(s)
=\mathbb{E}\left(s^{(\mathcal{M}_{j+1}+X_j)^+}\right) =\mathbb{E}\left(\mathbb{E}\left(s^{(\mathcal{M}_{j+1}+X_j)^+}|\mathcal{M}_{j+1}\right)\right)\\ \nonumber 
&=\sum_{i=0}^{-m_j-1}\pi^{(j+1)}_i\mathbb{E}s^{\left(i+X_j\right)^+} 
+\sum_{i=-m_j}^{\infty}\pi^{(j+1)}_i\mathbb{E}s^{i+X_j}\\ \nonumber 
&=\sum_{i=0}^{-m_j-1}\pi^{(j+1)}_i \left(\sum_{k=m_j}^{-i-1}p^{(j)}_k +\sum_{k=-i}^{\infty}s^{i+k}p_k^{(j)} \right)
+G_{X_j}(s)\left(G_{\mathcal{M}_{j+1}}(s)-\sum_{i=0}^{-m_j-1}\pi^{(j+1)}_i s^i \right)\\ \nonumber &=\sum_{i=0}^{-m_j-1}\pi^{(j+1)}_i \left(\sum_{k=m_j}^{-i-1}p^{(j)}_k +\sum_{k=-i}^{\infty}s^{i+k}p_k^{(j)} \right)+G_{X_j}(s)G_{\mathcal{M}_{j+1}}(s)\\ \nonumber 
&-\sum_{i=0}^{-m_j-1}\pi^{(j+1)}_i\sum_{k=m_j}^{-i-1}p^{(j)}_k s^{i+k} -\sum_{i=0}^{-m_j-1}\pi^{(j+1)}_i\sum_{k=-i}^{\infty}p^{(j)}_k s^{i+k}\\
&=
G_{X_j}(s)G_{\mathcal M_{j+1}}(s)
+
\sum_{i=0}^{-m_j-1}\pi_i^{(j+1)}
\left(
F_{X_j}(-i-1)
-\sum_{k=m_j}^{-i-1}p_k^{(j)}s^{i+k}
\right)\nonumber\\
&=
G_{X_j}(s)G_{\mathcal M_{j+1}}(s)
-
(1-s)s^{m_j}
\sum_{i=0}^{-m_j-1}\pi_i^{(j+1)}
\sum_{r=0}^{-m_j-1-i}
s^{r+i}F_{X_j}(m_j+r),
\end{align}

where the last identity was obtained by
\begin{align*}
\frac{1-s^{i+k}}{1-s}=-\sum_{\ell=i+k}^{-1}s^\ell,\,i+k\in\{-1,\,-2,\,\ldots\}
\end{align*}
and
\begin{align*}
\sum\limits_{k=m_j}^{-i-1}p_k^{(j)}\frac{1-s^{i+k}}{1-s}&=-
\sum\limits_{k=m_j}^{-i-1}p_k^{(j)}\sum_{\ell=i+k}^{-1}s^\ell
=-\sum_{\ell=m_j+i}^{-1}s^\ell\sum_{k=m_j}^{\ell-i}p_k^{(j)}
=-\sum_{\ell=m_j+i}^{-1}s^\ell F_{X_j}(\ell-i)\\
&=-s^{m_j}\sum_{r=0}^{-m_j-i-1}s^{i+r}F_{X_j}(m_j+r).
\end{align*}

For each $j=1,\,2,\,\ldots,\,N$, the matrix form of \eqref{rearangements} is 
\begin{align}\label{matrix_identity}
A(s){\pmb G}_{\mathcal M}(s)
=(s-1){\pmb B}(s),
\end{align}
where 
\begin{align*}
{\pmb G}_{\mathcal M}(s)
&=
\left(
G_{\mathcal M_1}(s),\,
G_{\mathcal M_2}(s),\,
\ldots,\,
G_{\mathcal M_N}(s)
\right)^T,\\
{\pmb B}(s)&=
\left(
B_1(s),\,
B_2(s),\,
\ldots,\,
B_N(s)
\right)^T,
\end{align*}
with

\begin{align*}
B_j(s)&=s^{m_j}\sum\limits_{i=0}^{-m_j-1}\pi^{(j+1)}_i\,s^i\sum\limits_{k=0}^{-m_j-1-i}s^kF_{X_j}(m_j+k)\\
&=s^{m_j}\sum_{i=0}^{-m_j-1}\pi^{(j+1)}_i s^i\sum_{n=i}^{-m_j-1}s^{n-i}F_{X_j}(m_j+n-i)\\
&=\sum_{n=0}^{-m_j-1}s^{n+m_j}\sum_{i=0}^{n}\pi_i^{(j+1)}F_{X_j}(m_j+n-i)\\
&=\sum_{k=m_j}^{-1}s^k\sum_{i=0}^{k-m_j}\pi_i^{(j+1)}F_{X_j}(k-i),
\quad j=1,\,2,\,\ldots,\,N,
\end{align*}
and 
\begin{align*}
A(s)
=
\begin{pmatrix}
1 & -G_{X_1}(s) & 0 & \cdots & 0\\
0 & 1 & -G_{X_2}(s) & \cdots & 0\\
\vdots & \vdots & \vdots & \ddots & \vdots\\
0 & 0 & 0 & \cdots & -G_{X_{N-1}}(s)\\
-G_{X_N}(s) & 0 & 0 & \cdots & 1
\end{pmatrix}.
\end{align*}
We now compute
\begin{align*}
&A^{-1}(s)=
\frac{1}{1-G_{S_N}(s)}\times\\
&\begin{pmatrix}\nonumber
1&G_{X_1}(s)&G_{X_1+X_2}(s)&\ldots&G_{X_1+X_2+\ldots+X_{N-1}}(s)\\
G_{X_2+X_3+\ldots+X_N}(s)&1&G_{X_2}(s)&\ldots&
G_{X_2+X_3+\ldots+X_{N-1}}(s)\\
G_{X_3+X_4+\ldots+X_N}(s)&G_{X_1+X_3+X_4+\ldots+X_N}(s)&1&\ldots&G_{X_3+X_4+\ldots+X_{N-1}}(s)\\
\vdots&\vdots&\vdots&\ddots&\vdots\\
G_{X_{N-1}+X_N}(s)&G_{X_1+X_{N-1}+X_N}(s)&G_{X_1+X_2+X_{N-1}+X_N}(s)&\ldots&G_{X_{N-1}}(s)\\
G_{X_N}(s)&G_{X_1+X_N}(s)&G_{X_1+X_2+X_N}(s)&\ldots&1
\end{pmatrix}.
\end{align*}
It is straightforward to verify $A(s)A^{-1}(s)={\pmb I}$, where ${\pmb I}$ is the identity matrix. This completes the proof.
\hfill $\square$
\end{proof}

\bigskip

\begin{proof}{of Theorem \ref{T2}.}
Let us begin by expanding $1/(G_{S_N}(s)-1)$ by Maclaurin series. For $s\in\mathbb{C}$ such that $|s|<1$ and $G_{S_N}(s)\neq1$, we have
\begin{align*}
\frac{1}{G_{S_N}(s)-1}=\frac{s^{-D}}{\sum\limits_{n=D}^{\infty}s^{n-D}\,f_N(n)-s^{-D}}=\sum_{n=-D}^{\infty}s^n\,a_n,\,D=m_1+m_2+\cdots+m_N<0.
\end{align*}
From this
\begin{align*}
1=\sum_{n=-D}^{\infty}s^n\,a_n\left(\sum_{n=D}^{\infty}s^n\,f_N(n)-1\right)=\sum_{n=0}^{\infty}\left(\sum_{k=-D}^{n-D}a_kf_N(n-k)\right)\,s^n-\sum_{n=-D}^{\infty}s^n\,a_n
\end{align*}
and consequently
\begin{align}\label{coeff}
1+\sum_{n=-D}^{\infty}s^n\,a_n=
\sum_{n=0}^{\infty}\left(\sum_{k=-D}^{n-D}a_kf_N(n-k)\right)\,s^n,\quad -D>0.
\end{align}
By equating the coefficients at the powers of $s$ in \eqref{coeff}, we obtain
\begin{align}\label{the_claimed}
a_{-D}=\frac{1}{f_N(D)}, \, a_{n}=\frac{1}{f_N(D)}\left({\pmb 1}_{\{n\geqslant-2D\}}a_{n+D}-\sum_{k=1}^{n+D}f_N(k+D)a_{n-k}\right),\quad n\geqslant-D+1.
\end{align}
Indeed, by looking at the right-hand side of \eqref{coeff}, we define
\begin{align}\label{defined_R}
R_n:=\sum_{k=-D}^{n-D}a_kf_N(n-k),\quad n\in\mathbb{N}_0.
\end{align}

We shall show
\begin{align}\label{statement}
R_n=
\begin{cases}
1,&n=0,\\
0,&1\leqslant n <-D,\\
a_n,&n\geqslant -D.
\end{cases}
\end{align}

If $n=0$, then by \eqref{coeff}
\begin{align*}
1=R_0=a_{-D}f_N(D).
\end{align*}
If $n\in\mathbb{N}$, then from \eqref{defined_R} and \eqref{the_claimed}
\begin{align*}
R_n&=a_{n-D}f_N(D)+\sum_{k=-D}^{n-D-1}a_kf_N(n-k)\\
&={\pmb 1}_{\{n\geqslant -D\}}a_n-\sum_{k=1}^{n}f_N(k+D)a_{n-D-k}
+\sum_{k=-D}^{n-D-1}a_kf_N(n-k)\\
&={\pmb 1}_{\{n\geqslant -D\}}a_n,
\end{align*}
which yields \eqref{statement}.

Thus, having the Maclaurin series of $1/(G_{S_N}(s)-1)$, we proceed comparing the coefficients at the powers of $s$ in \eqref{eq:T1}. Recall that

\begin{align*}
\frac{G_{\mathcal{M}_j}(s)}{1-s}=\sum_{n=0}^{\infty}F^{(j)}_{\infty}(n)s^n,\quad |s|<1,\quad j=1,\,2,\,\ldots,\,N.
\end{align*}

By Proposition~\ref{T1},
\begin{align*}
\frac{G_{\mathcal M_j}(s)}{1-s}
&=\frac{1}{G_{S_N}(s)-1}
\sum_{r=1}^{N}B_r(s)\,G_{\sum_{k\in I_{jr}}X_k}(s),
\qquad j=1,\ldots,N,
\end{align*}
where
\begin{align*}
B_r(s):=\sum_{\ell=m_r}^{-1}s^\ell\sum_{i=0}^{\ell-m_r}\pi^{(r+1)}_iF_{X_r}(\ell-i),\quad r=1,\,2,\,\ldots,\,N,
\end{align*}
and
\[
I_{jr}:=
\begin{cases}
(j,j+1,\ldots,r-1), & j<r,\\
\varnothing, & j=r,\\
(1,\,\ldots,\,r-1,\,j,\,j+1,\,\ldots,\,N), & j>r.
\end{cases}
\]
with then convention that $G_{\varnothing}=1$.
Then
\begin{align}\label{to_compare}
\sum_{n=0}^{\infty}F^{(j)}_{\infty}(n)s^n
=\sum_{r=1}^{N}
\left(
\sum_{n=0}^{\infty}
a_n^{I_{jr}}s^n
\right)
\left(
\sum_{\ell=m_r}^{-1}
s^\ell
\sum_{i=0}^{\ell-m_r}
\pi_i^{(r+1)}
F_{X_r}(\ell-i)
\right)
\end{align}
since
\begin{align*}
\frac{G_{\sum_{k\in I_{jr}}X_k}(s)}{G_{S_N}(s)-1}=\sum_{n=0}^{\infty}
a_n^{I_{jr}}s^n,\quad j,r=1,\,2,\,\ldots,\,N,
\end{align*}
where 
\begin{align*}
a_n^{I_{jr}}
=\sum_{k=\mu_{jr}}^{n+D}
p_k^{I_{jr}}
a_{n-k},
\qquad
\mu_{jr}=\sum_{t\in I_{jr}}m_t,
\qquad
n\geqslant \mu_{jr}-D,
\end{align*}

\begin{align*}
p_k^{I_{jr}}
=
\mathbb{P}\!\left(
\sum_{t\in I_{jr}} X_t = k
\right),
\qquad
k\in\mathbb{Z},
\qquad
p_k^{\varnothing}
=
\begin{cases}
1, & k=0,\\
0, & k\neq 0,
\end{cases}
\end{align*}

Since $a_n^{I_{jr}}=0$ for $n<\mu_{jr}-D$, the right-hand side of \eqref{to_compare} contains only nonnegative powers of $s$. Therefore, comparing the coefficients of $s^n$ in \eqref{to_compare} yields
\[
F_\infty^{(j)}(n)
=
\sum_{r=1}^{N}
\sum_{\ell=m_r}^{-1}
a_{n-\ell}^{I_{jr}}
\sum_{i=0}^{\ell-m_r}
\pi_i^{(r+1)}
F_{X_r}(\ell-i),
\]
which is the assertion of the theorem.
\hfill $\square$
\end{proof}

\bigskip

\begin{proof}{of Proposition \ref{T3}.}
We first prove \eqref{eq1_T3}. By the definition, for each $j=1,\,2,\,\ldots,\,N$,
\begin{align*}
F_\infty^{(j)}(n)
=
\mathbb P\left(
X_j\leqslant n,\,
X_j+X_{j+1}\leqslant n,\,
X_j+X_{j+1}+X_{j+2}\leqslant n,\,
\ldots
\right),\quad n\geqslant M_j.
\end{align*}
Conditioning on the value of $X_j$, we obtain
\begin{align*}
F_\infty^{(j)}(n)
&=
\sum_{k=m_j}^{\infty}
\mathbb P(X_j=k)\,
\mathbb P\left(
k+X_{j+1}\leqslant n,\,
k+X_{j+1}+X_{j+2}\leqslant n,\,
\ldots
\right)\\
&=
\sum_{k=m_j}^{\infty}
p_k^{(j)}
\mathbb P\left(
X_{j+1}\leqslant n-k,\,
X_{j+1}+X_{j+2}\leqslant n-k,\,
\ldots
\right)\\
&=
\sum_{k=m_j}^{\infty}
p_k^{(j)}
F_\infty^{(j+1)}(n-k).
\end{align*}
We now prove \eqref{eq2_T3}. Observe that
\begin{align*}
F_\infty^{(j)}(m_j+n)
&=
\mathbb P\left(
X_j\leqslant m_j+n,\,
X_j+X_{j+1}\leqslant m_j+n,\,
X_j+X_{j+1}+X_{j+2}\leqslant m_j+n,\,
\ldots
\right)\\
&=
\mathbb P\left(
X_j+\mathcal M_{j+1}\leqslant m_j+n
\right),
\end{align*}
where
\[
\mathcal M_{j+1}
=
\max\{0,\,X_{j+1},\,X_{j+1}+X_{j+2},\,\ldots\}.
\]
Conditioning on $\mathcal M_{j+1}$, we obtain
\begin{align*}
F_\infty^{(j)}(m_j+n)
=
\sum_{k=0}^{\infty}
\mathbb P(\mathcal M_{j+1}=k)
\mathbb P(X_j\leqslant m_j+n-k)
=
\sum_{k=0}^{\infty}
\pi_k^{(j+1)}
F_{X_j}(m_j+n-k),\quad n\geqslant0.
\end{align*}
\hfill $\square$
\end{proof}

\begin{proof}{of Lemma 1.}

The identity \eqref{eq:memory_compact} follows by multiplying both sides of \eqref{matrix_identity} by the row vector
\begin{align}\label{vector}
\left(
G_{X_N}(s),\,
G_{X_N+X_1}(s),\,
\ldots,\,
G_{X_N+X_1+\cdots+X_{N-2}}(s),\,
1
\right).
\end{align}
Notice that cyclic permutations of the entries of the row vector \eqref{vector} yield analogous identities to \eqref{eq:memory_compact}. However, these identities are equivalent, since the corresponding row vectors differ only by multiplication by suitable products of the generating functions $G_{X_j}(s)$.

Identity \eqref{eq:memory_compact_E} follows from \eqref{eq:memory_compact} by differentiating both sides and letting $s\to1^{-}$; see \cite[pp.~340--341]{gerve_grigutis_2024}.
\hfill $\square$
\end{proof}
\newpage
\section{Examples}\label{sec:Examples}
In this section, we provide several examples verifying the correctness of the derived formulas. The presented computations and visualizations are implemented using the software \cite{Python2} and \cite{Mathematica}. We first revisit Example~2 from \cite{Grigutis2024}.

\begin{ex}\label{ex_1}(\cite[Ex. 2]{Grigutis2024})
Suppose $X_1$, $X_2$, and $X_3$ are independent random variables such that $X_i\dist X_{i+3}$ for all $i\in\mathbb{N}$ and their probability distributions are
\begin{align*}
p^{(1)}_k=\mathbb{P}(X_1=k)
&=0.55(0.45)^{k-1}, &\quad k&=1,\,2,\,\ldots,\\
p^{(2)}_k=\mathbb{P}(X_2=k)
&=e^{-1/2}\frac{(1/2)^{k+3}}{(k+3)!}, &\quad k&=-3,\,-2,\,\ldots,\\
p^{(3)}_k=\mathbb{P}(X_3=k)
&=e^{-k}-e^{-k-1}, &\quad k&=0,\,1,\,\ldots.
\end{align*}
We compute the selected values of the distribution functions 
\begin{align*}
&F^{(1)}_{\infty}(n)=\mathbb{P}(X_1\leqslant n,\,X_1+X_2\leqslant n,\,\ldots),\quad n\geqslant M_1,\\
&F^{(2)}_{\infty}(n)=\mathbb{P}(X_2\leqslant n,\,X_2+X_3\leqslant n,\,\ldots),\quad n\geqslant M_2,\\
&F^{(3)}_{\infty}(n)=\mathbb{P}(X_3\leqslant n,\,X_3+X_4\leqslant n,\,\ldots),\quad n\geqslant M_3, 
\end{align*}
where
\begin{align*}
M_1&=\max\{1,\,1-3,\,1-3+0\}=1,\\
M_2&=\max\{-3,\,-3+0,\,-3+0+1\}=-2,\\
M_3&=\max\{0,\,0+1,\,0+1-3\}=1,
\end{align*}
by statements given in Section \ref{sec:results}, and for the sake of correctness, we approximate these distribution functions by computer simulations also.
\end{ex}

We begin with the mentioned computer simulations that approximate $F_{\infty}^{(1)},\,F_{\infty}^{(2)},$ and $F_{\infty}^{(3)}$. Let us describe our experiment. First, we generate $T\in\mathbb N$ independent realizations of the random variables
$X_1,\,X_2,\,\ldots,\,X_M$, $M\in\mathbb N$, where the random variables in the sequence are independent and satisfy $X_i\dist X_{i+3}$ for all $i\in\mathbb N$. For this, we use the Python library
{\sc NumPy} \cite{numpy} and the probability mass functions
$p^{(1)},\,p^{(2)},\,p^{(3)}$. For each realization, we construct the array of partial sums
\begin{align*}
P_i^{(j)}
:=
\left(
S_{i,\,1}^{(j)},\,S_{i,\,2}^{(j)},\,\ldots,\,S_{i,\,M-j+1}^{(j)}
\right),\quad j=1,\,2,\,3,
\end{align*}
where
\begin{align*}
S_{i,\,m}^{(j)}
=
X_j^{(i)}+X_{j+1}^{(i)}+\cdots+X_{j+m-1}^{(i)},
\quad
m=1,\,2,\,\ldots,\,M-j+1, 
\end{align*}
and the index $i=1,\,2,\,\ldots,\,T$ refers to the $i$'th realization. Thus, each array $P_i^{(j)}$ contains $M-j+1$ partial sums beginning with $X_j^{(i)}$. Fix $n\in\mathbb{Z}$. We classify the array (also called trajectory) $P_i^{(j)}$ as \emph{green} if all of its elements are
$\leqslant n$, and classify it as \emph{red} otherwise. Equivalently, a trajectory is colored red whenever it crosses the threshold level $n$ at least once within the observation window from $1$ to $M-j+1$. Figure~\ref{antra_figura} depicts sample trajectories for $j=1$,
$n=1,\,3,\,5,\,10$, $M=250$, and $T=800$.

\begin{figure}[H]
\centering
 \includegraphics[height=5cm,width=\textwidth]{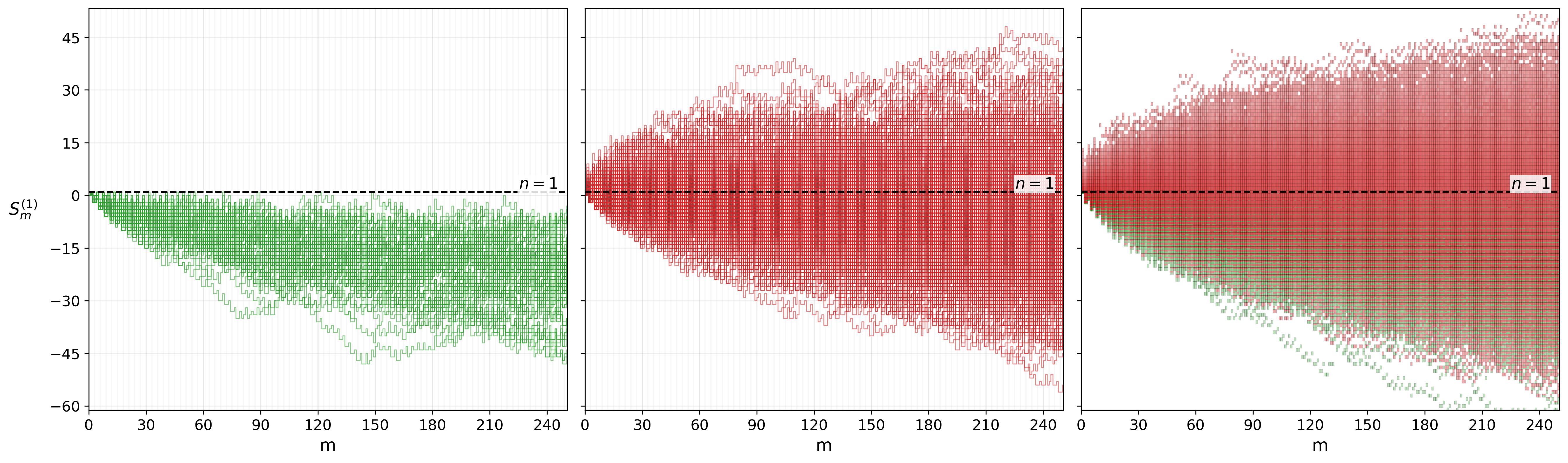}
 \includegraphics[height=5cm,width=\textwidth]{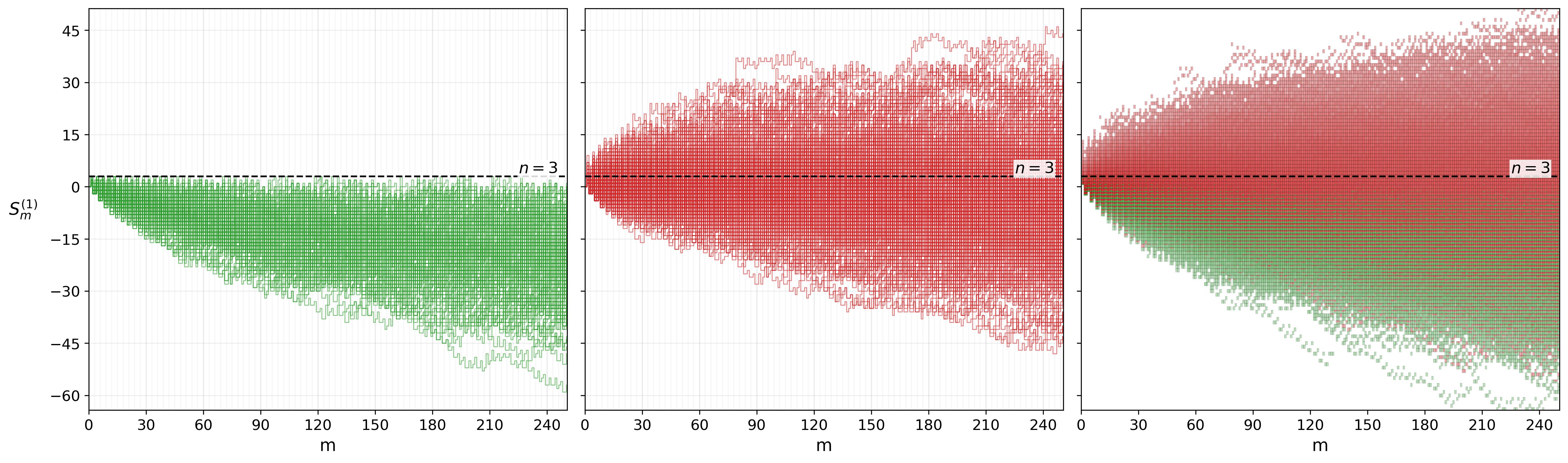}
  \includegraphics[height=5cm,width=\textwidth]{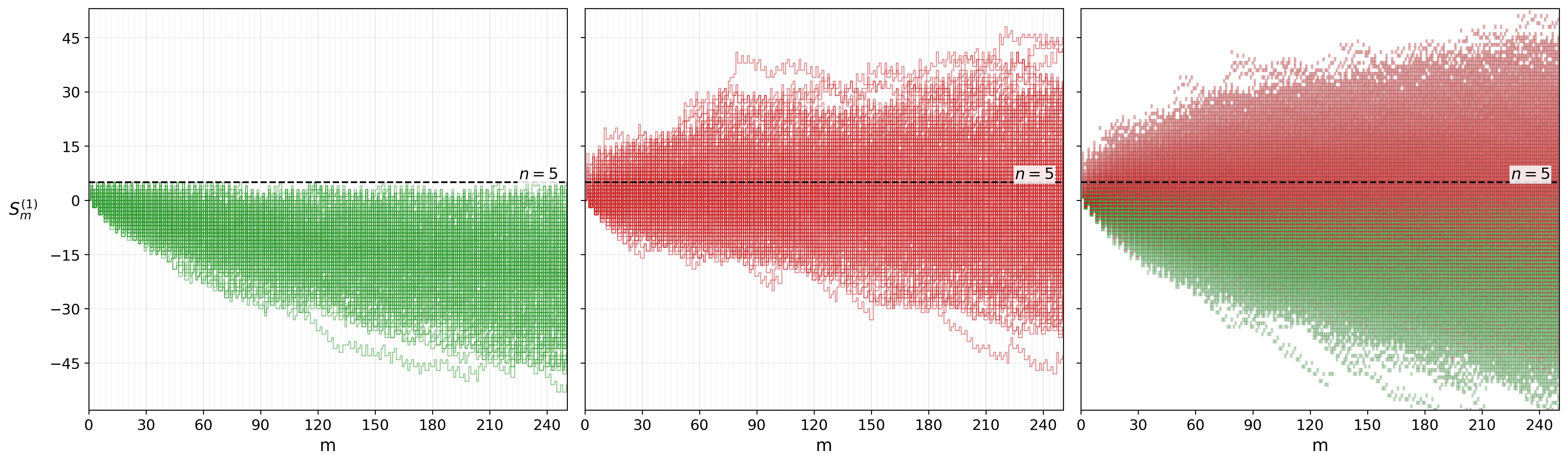}
 \includegraphics[height=5cm,width=\textwidth]{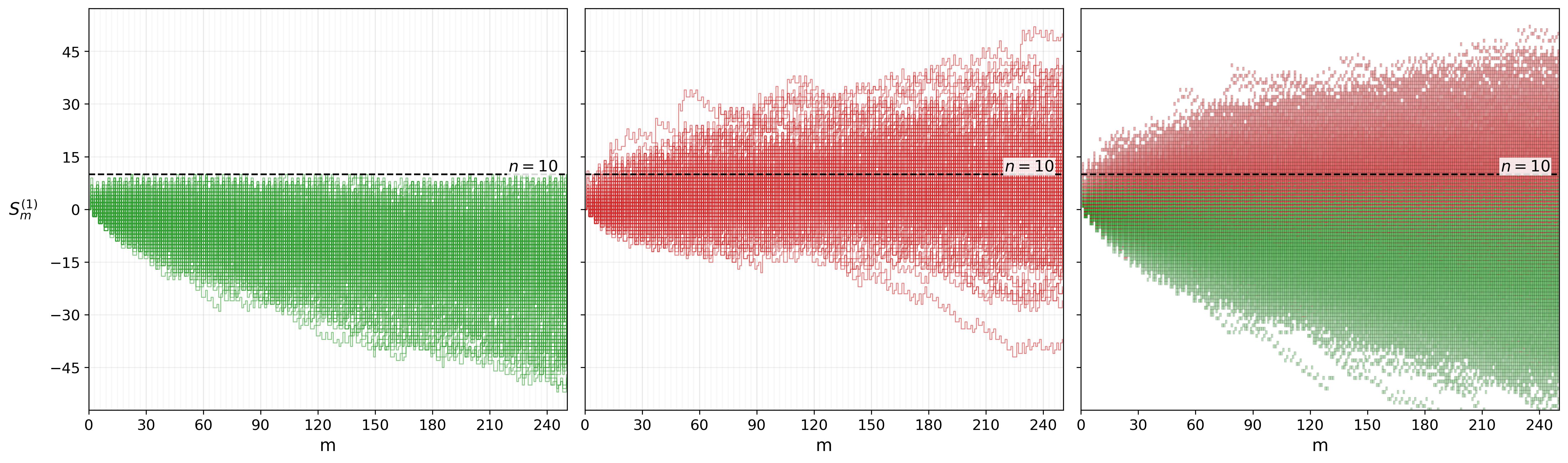}
 \caption{Sample trajectories (left -- green, middle -- red, right -- both) corresponding to the arrays $P_i^{(1)}$ for $n=1,\,3,\,5,\,10$, $M=250$, and $T=800$. Green trajectories do not cross the threshold level $n$, while red trajectories cross it at least once.
}\label{antra_figura}
\end{figure}

As seen in Figure \ref{antra_figura}, the number of green trajectories increases with $n$, while the number of red trajectories decreases. Hence, for each $j=1,\,2,\,3$, the proportion of green trajectories
\begin{align*}
\hat F_\infty^{(j)}(n)
:=
\frac{1}{T}
\#
\left\{
i\in\{1,\ldots,T\}:
\max P_i^{(j)}
\leqslant n
\right\},
\end{align*}
is expected to approximate the distribution function
$F_\infty^{(j)}(n)$. More extensive results of the
described experiment are presented in Table~\ref{tab:numerical_values}.

\begin{table}[h]
\centering
\captionsetup{justification=centering}
\begin{tabular}{c|ccccccc}
\hline
$n$ & 1 & 2 & 3 & 5 & 10 & 20 & 30 \\
\hline
$\hat{F}_{\infty}^{(1)}(n)$ & 0.0693 & 0.1303 & 0.1863 & 0.2872 & 0.4856 & 0.7313 & 0.8604\\
\hline
$\hat{F}_{\infty}^{(2)}(n)$ & 0.1804 & 0.2309 & 0.2794 & 0.3679 & 0.5437 & 0.7630 & 0.8767\\
\hline
$\hat{F}_{\infty}^{(3)}(n)$ & 0.0435 & 0.0983 & 0.1536 & 0.2572 & 0.4643 & 0.7203 & 0.8545\\
\hline
\end{tabular}
\caption{Monte Carlo estimates of the distribution functions
$F_{\infty}^{(1)},\,F_{\infty}^{(2)},\,F_{\infty}^{(3)}$
obtained with $M=300\,000$ and $T=50\,000$. The values are rounded to four decimal places.}
\label{tab:numerical_values}
\end{table}

We now proceed with the analytical solution based on the statements in Section \ref{sec:results}. Let us observe that in this example,
\begin{align*}
&\mathbb{P}\left(\max\{X_1,\,X_1+X_2,\,X_1+X_2+X_3\}= X_1+X_2+X_3\right)<1,\\
&\mathbb{P}\left(\max\{X_2,\,X_2+X_3,\,X_2+X_3+X_1\}=X_2+X_3+X_1\right)=1,\\
&\mathbb{P}\left(\max\{X_3,\,X_3+X_1,\,X_3+X_1+X_2\}= X_3+X_1+X_2\right)<1,
\end{align*}
where the second equality follows from the inequalities
\begin{align}\label{reason}
\mathbb{P}\left(X_2+X_3+X_1\geqslant X_2+X_3\geqslant X_2\right)=1
\end{align}
which hold because $X_3\geqslant 0$, $X_1\geqslant 1$ a.s.
Therefore, according to what was explained in subsection \ref{subsec:prior}, only $F_{\infty}^{(2)}$ from this example falls within the scope of \cite[Thm. 1]{Grigutis2024}. Moreover, by \eqref{reason},
\begin{align*}
\max\{Z_1,\,Z_1+Z_2,\,\ldots\}\dist\max\{X_2,\,X_2+X_3,\,X_2+X_3+X_1,\,\ldots\},
\end{align*}
where $Z_1,\,Z_2,\,\ldots$ are independent copies of $X_1+X_2+X_3$.

In this example 
\begin{align*}
\mathbb{E}S_3=\frac{1}{e-1}-\frac{15}{22}\approx -0.0998<0
\end{align*}
and
\begin{align*}
G_{S_3}(s)=G_{X_1}(s)G_{X_2}(s)G_{X_3}(s)=\frac{0.55}{1-0.45s}\frac{e^{-1/2+s/2}}{s^2}\frac{e-1}{e-s}=1
\end{align*}
has a root $\alpha:\approx-0.364796$ inside the unit disk. We now need to set up the system \eqref{main_system}. Since $m_1=1$, $m_2=-3$, $m_3=0$, and
\begin{align*}
\pi_0^{(3)}=\mathbb{P}(\max\{0,\,X_3,\,X_3+X_1,\,\ldots\}=0)=0
\end{align*}
as $\mathbb{P}(X_3+X_1\geqslant1)=1$, Lemma \ref{L1} yields $2\times2$ linear system

\begin{align*}
\begin{pmatrix}
p_{-3}^{(2)}+\alpha F_{X_2}(-2) &
\alpha p_{-3}^{(2)}
\\[2mm]
p_{-3}^{(2)}+F_{X_2}(-2) &
p_{-3}^{(2)}
\end{pmatrix}
\begin{pmatrix}
\pi_1^{(3)}\\
\pi_2^{(3)}
\end{pmatrix}
=
\begin{pmatrix}
0\\
-\mathbb{E}S_3
\end{pmatrix},
\end{align*}
whose solution is

\begin{align}
\pi_1^{(3)}\label{sol1}
&= \frac{\mathbb{E}S_3}{p_{-3}^{(2)}} \frac{\alpha}{1-\alpha}=
\sqrt{e}\left(\frac{1}{e-1}-\frac{15}{22}\right)\frac{\alpha}{1-\alpha}\approx0.0439987,
\\
\pi_2^{(3)}\label{sol2}
&= -\frac{\mathbb{E}S_3}{\left(p_{-3}^{(2)}\right)^2}
\frac{p_{-3}^{(2)}+\alpha F_{X_2}(-2)}{1-\alpha}
=-\sqrt{e}\left(\frac{1}{e-1}-\frac{15}{22}\right)\frac{1+3\alpha/2}{1-\alpha}\approx
0.0546139.
\end{align}

We first obtain $F^{(1)}_{\infty}$. According to Theorem \ref{T2} and having that $m_1=1$, $m_2=-3$, $m_3=0$, $\pi^{(3)}_0=0$, only $j=2$ term contributes and we obtain
\begin{align*}
F_{\infty}^{(1)}(n)
&=\sum_{j=1}^{3}\sum_{\ell=m_j}^{-1}a^{I_{1j}}_{n-\ell}\sum_{i=0}^{\ell-m_j}\pi_i^{(j+1)}F_{X_j}(\ell-i)
=\sum_{\ell=-3}^{-1}a^{I_{12}}_{n-\ell}\sum_{i=0}^{\ell+3}\pi_i^{(3)}F_{X_2}(\ell-i)\\
&=a^{(1)}_{n+2}\pi_1^{(3)}p^{(2)}_{-3}+a_{n+1}^{(1)}\left(\pi^{(3)}_1\left(p^{(2)}_{-3}+p^{(2)}_{-2}\right)+\pi_2^{(3)}p^{(2)}_{-3}\right),\quad n\in\mathbb{N}_0,
\end{align*}
where $a_n^{I_{12}}=a_n^{(1)}$,
\begin{align*}
a^{(1)}_1=a^{(1)}_2=0,\quad a_n^{(1)}=\sum_{k=1}^{n-2}p_k^{(1)}a_{n-k},\quad n\geqslant3,
\end{align*}
and 
\begin{align*}
a_2=\frac{1}{f_3(-2)},\,
a_{n}=\frac{1}{f_3(-2)}\left({\pmb 1}_{\{n\geqslant4\}}a_{n-2}-\sum_{k=1}^{n-2}f_3(k-2)a_{n-k}\right),\quad n\geqslant3.
\end{align*}
Therefore, $F_{\infty}^{(1)}(n)=0$ for all $n\leqslant0$ and

\begin{align}\label{main_ex_1}
F_\infty^{(1)}(n)
=
0.55\left(\frac{1}{e-1}-\frac{15}{22}\right)
\frac{1}{1-\alpha}
\sum_{k=1}^{n}(0.45)^{k-1}
\Bigl(
\alpha a_{n+2-k}
-
a_{n+1-k}
\Bigr),
\qquad n\in\mathbb N,
\end{align}
where
\begin{align}\label{seq_a}
a_1=0,\quad
a_2
=
\frac{e^{3/2}}{0.55(e-1)},
\quad
a_n
=
a_2
\left({\pmb 1}_{\{n\geqslant4\}}a_{n-2}-\sum_{k=1}^{n-2}f_3(k-2)a_{n-k}\right),
\quad n\geqslant3,
\end{align}
with
\begin{align}\label{pmf_S3}
f_3(k)
=
\frac{0.55(e-1)}{e^{3/2}}
\sum_{r=1}^{k+3}
(0.45)^{r-1}
\sum_{u=0}^{k-r+3}
\frac{(1/2)^u}
     {u!}e^{-(k-r-u+3)},
\qquad k\geqslant -2.
\end{align}
Let us compute $F_{\infty}^{(2)}$. By Theorem \ref{T2}, for $j=2$ only, we have $I_{22}=\varnothing$. Therefore $a^{I_{22}}_n=a_n$ and
\begin{align*}
F_{\infty}^{(2)}(n)
&=\sum_{j=1}^{3}\sum_{\ell=m_j}^{-1}a^{I_{2j}}_{n-\ell}\sum_{i=0}^{\ell-m_j}\pi_i^{(j+1)}F_{X_j}(\ell-i)
=\sum_{\ell=-3}^{-1}a^{I_{22}}_{n-\ell}\sum_{i=0}^{\ell+3}\pi_i^{(3)}F_{X_2}(\ell-i)\\
&=a_{n+2}\pi_1^{(3)}p^{(2)}_{-3}+a_{n+1}\left(\pi^{(3)}_1\left(p^{(2)}_{-3}+p^{(2)}_{-2}\right)+\pi_2^{(3)}p^{(2)}_{-3}\right),\quad n\in\mathbb{N}_0,
\end{align*}
which simplifies to
\begin{align}\label{main_ex_2}
F_\infty^{(2)}(n)=\left(\frac{1}{e-1}-\frac{15}{22}\right)\frac{1}{1-\alpha}\left(\alpha a_{n+2}-a_{n+1}\right),
\qquad n\in\mathbb N_0,
\end{align}
where the coefficients $a_n$ are defined in \eqref{seq_a}. Moreover, according to Proposition \ref{T3},
\begin{align*}
F_{\infty}^{(2)}(-1)&=\pi_0^{(3)}F_{X_2}(-1)+\pi_1^{(3)}F_{X_2}(-2)+\pi_2^{(3)}F_{X_2}(-3)=\pi_1^{(3)}\left(p^{(2)}_{-3}+p^{(2)}_{-2}\right)+\pi_2^{(3)}p^{(2)}_{-3}\\
&=-\left(\frac{1}{e-1}-\frac{15}{22}\right)\frac{1}{1-\alpha}\approx 0.0732,\\
F_{\infty}^{(2)}(-2)&=\pi_0^{(3)}F_{X_2}(-2)+\pi_1^{(3)}F_{X_2}(-3)=\pi_1^{(3)}p^{(2)}_{-3}=\left(\frac{1}{e-1}-\frac{15}{22}\right)\frac{\alpha}{1-\alpha}\approx 0.0267.
\end{align*}
Of course, $F_{\infty}^{(2)}(n)=0$ if $n<-2$.

Let's compute $F_{\infty}^{(3)}$. By Theorem \ref{T2}, for $j=2$, we have
$I_{32}=(1,\,3)$. Therefore
\begin{align*}
F_{\infty}^{(3)}(n)
&=\sum_{j=1}^{3}\sum_{\ell=m_j}^{-1}a^{I_{3j}}_{n-\ell}\sum_{i=0}^{\ell-m_j}\pi_i^{(j+1)}F_{X_j}(\ell-i)
=\sum_{\ell=-3}^{-1}a^{I_{32}}_{n-\ell}\sum_{i=0}^{\ell+3}\pi_i^{(3)}F_{X_2}(\ell-i)\\
&=a^{{(1,\,3)}}_{n+2}\pi_1^{(3)}p^{(2)}_{-3}+a_{n+1}^{{(1,3)}}\left(\pi^{(3)}_1\left(p^{(2)}_{-3}+p^{(2)}_{-2}\right)+\pi_2^{(3)}p^{(2)}_{-3}\right),\quad n\in\mathbb{N}_0.
\end{align*}
Substituting the values of $\pi_1^{(3)}$ and $\pi_2^{(3)}$ from \eqref{sol1} and $\eqref{sol2}$, and using
\begin{align*}
a_1^{(1,\,3)}=a_2^{(1,\,3)}=0,\quad a_n^{(1,\,3)}
&=\sum_{k=1}^{n-2}p_k^{(1,3)}a_{n-k},\quad n\geqslant 3,\\
p_k^{(1,\,3)}
&=\sum_{r=1}^{k}0.55(0.45)^{r-1}\left(e^{-(k-r)}-e^{-(k-r)-1}\right),\quad k=1,2,\ldots,
\end{align*}
we obtain $F^{(3)}_{\infty}(n)=0,\, n\leqslant0$, and
\begin{align}\label{main_ex_3}
F_\infty^{(3)}(n)
&=\left(\frac{1}{e-1}-\frac{15}{22}\right)\frac{1}{1-\alpha}\left(\alpha a_{n+2}^{{(1,\,3)}}-a_{n+1}^{{(1,\,3)}}\right)\nonumber\\
&=\left(\frac{1}{e-1}-\frac{15}{22}\right)\frac{1}{1-\alpha}\sum_{k=1}^{n}p_k^{{(1,\,3)}}\left(\alpha a_{n+2-k}-a_{n+1-k}\right), 
\quad n\in\mathbb N.
\end{align}
where the coefficients $a_n$ are defined in \eqref{seq_a}. 

In Table \ref{tab:analytical_values}, we put the selected values of $F^{(1)}_{\infty},\,F^{(2)}_{\infty},\,F^{(3)}_{\infty}$ computed by \eqref{main_ex_1}, \eqref{main_ex_2}, and \eqref{main_ex_3} accordingly. 

\begin{table}[H]
\centering
\captionsetup{justification=centering}
\begin{tabular}{c|ccccccc}
\hline
$n$ & 1 & 2 & 3 & 5 & 10 & 20 & 30 \\
\hline
$F_{\infty}^{(1)}(n)$ & 0.0696 & 0.1304 & 0.1860 & 0.2859 & 0.4850 & 0.7320 & 0.8606\\
\hline
$F_{\infty}^{(2)}(n)$ & 0.1801 & 0.2315 & 0.2799 & 0.3681 & 0.5442 & 0.7629 & 0.8766\\
\hline
$F_{\infty}^{(3)}(n)$ & 0.0440 & 0.0986 & 0.1538 & 0.2568 & 0.4639 & 0.7211 & 0.8549\\
\hline
\end{tabular}
\caption{Theoretical values of $F_{\infty}^{(1)}, F_{\infty}^{(2)}, F_{\infty}^{(3)}$. Numbers are rounded to four decimal places.}
\label{tab:analytical_values}
\end{table}

The agreement between the values presented in Tables \ref{tab:numerical_values} and \ref{tab:analytical_values} confirms the correctness of the formulas derived in Section~\ref{sec:results}. Moreover, Proposition \ref{T1} for $s\in\mathbb{C}$ such that $|s|<1$ and $G_{S_3}(s)\neq1$  yields the explicit expressions of the generating functions
\begin{align*}
\frac{G_{\mathcal M_1}(s)}{1-s}
&=
\frac{\left(\frac{1}{e-1}-\frac{15}{22}\right)(\alpha-s)}
{(1-\alpha)\left(11(e-1)e^{(s-1)/2}-s^2(20-9s)(e-s)\right)}
\,11s(e-s),\\
\frac{G_{\mathcal M_2}(s)}{1-s}
&=
\frac{\left(\frac{1}{e-1}-\frac{15}{22}\right)(\alpha-s)}
{(1-\alpha)\left(11(e-1)e^{(s-1)/2}-s^2(20-9s)(e-s)\right)}
\,(20-9s)(e-s),\\
\frac{G_{\mathcal M_3}(s)}{1-s}
&=
\frac{\left(\frac{1}{e-1}-\frac{15}{22}\right)(\alpha-s)}
{(1-\alpha)\left(11(e-1)e^{(s-1)/2}-s^2(20-9s)(e-s)\right)}
\,11s(e-1).
\end{align*}
Derivatives of $G_{\mathcal M_1}(s)$, $G_{\mathcal M_2}(s)$ and $G_{\mathcal M_3}(s)$, evaluated as $s\to1^-$, respectively, give
\begin{align*}
\mathbb{E}\mathcal{M}_1
&=
\frac{
e(506-298\alpha)
+37(-11+7\alpha)
+e^2(-187+127\alpha)
}
{4(e-1)(15e-37)(1-\alpha)}
\approx15.6653,
\\[2mm]
\mathbb{E}\mathcal{M}_2
&=
\frac{
-37(41+3\alpha)
+2e(703+441\alpha)
+e^2(-857+197\alpha)
}
{44(e-1)(15e-37)(1-\alpha)}
\approx13.8471,
\\[2mm]
\mathbb{E}\mathcal{M}_3
&=
\frac{
37(7-3\alpha)
+e(-187+127\alpha)
}
{4(15e-37)(1-\alpha)}
\approx16.2473,
\end{align*}
where $\alpha\approx-0.364796$ is a root of $G_{S_3}(s)=1$.

\begin{ex}\label{ex:classical}(Homogenous biased Rademacher random walk)
Let $N=1$ and suppose $X_1,\,X_2,\,X_3,\ldots$ are independent copies of $X$ whose distribution is
\begin{align*}
\mathbb P(X=-1)=\frac{1}{2}+\varepsilon=1-\mathbb{P}(X=1),
\quad
0<\varepsilon<\frac{1}{2}.
\end{align*}
We determine the distribution function $F^{(1)}_\infty$, generating function and moments of the random variable $\mathcal{M}_1$.
\end{ex}
According to Proposition \ref{T3} and Lemma \ref{L1},
\begin{align*}
F^{(1)}_\infty(-1)=-\mathbb{E}X=2\varepsilon,\quad 0<\varepsilon<\frac{1}{2}.
\end{align*}
By proposition \ref{T1},
\begin{align*}
\frac{G_{\mathcal{M}_1}(s)}{1-s}
=
\frac{4\varepsilon}
{(1-s)\left((1+2\varepsilon)-(1-2\varepsilon)s\right)}
=\frac{1}{1-s}
-
\frac{\rho}
{1-\rho s},\quad \rho=\frac{1-2\varepsilon}{1+2\varepsilon},\quad |s|<1,
\end{align*}
which implies
\begin{align}\label{clasical_ex}
\frac{G_{\mathcal{M}_1}(s)}{1-s}=\sum_{n=0}^{\infty}(1-\rho^{n+1})s^n \quad \Rightarrow \quad
F^{(1)}_\infty(n)
=
1-
\left(
\frac{1-2\varepsilon}
{1+2\varepsilon}
\right)^{n+1},
\quad
n\in\mathbb N_0.
\end{align}

Thus, statements from Section \ref{sec:results} immediately recover the classical formula \eqref{clasical_ex} for the distribution of the maximum of the biased Rademacher random walk; see, for example, \cite{Ross2014}, \cite{KarlinTaylor1975}. Moreover, by \eqref{clasical_ex},
\begin{align*}
G_{\mathcal{M}_1}\left(e^s\right)=\frac{1-\rho}{1-\rho e^s}
=(1-\rho)\sum_{k=0}^{\infty}\rho^k e^{ks}
=(1-\rho)\sum_{n=0}^{\infty}\left(\sum_{k=0}^{\infty}k^n\rho^k\right)\frac{s^n}{n!},\quad |s|<-\log\rho.
\end{align*}
Therefore,
\begin{align*}
\mathbb{E}\mathcal{M}_1^n=(1-\rho){\text{Li}}_{-n}(\rho),\qquad {\text{Li}}_{-n}(\rho)=\sum_{k=1}^{\infty}k^n\rho^k,\qquad n\in\mathbb{N}. 
\end{align*}
In particular,
\begin{align*}
\mathbb{E}\mathcal{M}_1=\frac{1-2\varepsilon}{4\varepsilon},\qquad 
\mathbb{E}\mathcal{M}_1^2=\frac{(1-2\varepsilon)}
{8\varepsilon^2}.
\end{align*}
As $\mathcal{M}_1=\max\{0,\,X_1,\,X_1+X_2,\,\ldots\}$, it follows that
\begin{align*}
\mathbb{E}\max\{X_1,\,X_1+X_2,\,\ldots\}
=
\mathbb{E}\mathcal{M}_1-F_{\infty}^{(1)}(-1)
=
\frac{(1-4\varepsilon)(1+2\varepsilon)}{4\varepsilon},
\end{align*}
where the higher moments of $\max\{X_1,\,X_1+X_2,\,\ldots\}$ can be computed analogously. In particular, when $\varepsilon=1/4$, although the random walk with increments valued in $\{-1,\,1\}$ has drift $\mathbb{E}X=-1/2$, the expected value of its all-time maximum is $0$, while its variance equals $3/2$.

\begin{ex}\label{ex_periodic_rademacher}(Bi seasonal biased Rademacher random walk)
Let $N=2$ and suppose $X_1,\,X_2,\,X_3,\ldots$ are independent random variables such that
$X_i\dist X_{i+2}$ for all $i\in\mathbb{N}$, and
\begin{align*}
\mathbb P(X_1=-1)=\frac12+\varepsilon=1-\mathbb{P}(X_1=1),
\qquad
\mathbb P(X_2=-1)=\frac12+\delta=1-\mathbb{P}(X_2=1),
\end{align*}
where $\varepsilon$ and $\delta$ are such that
\begin{align*}
-\frac12<\varepsilon,\,\delta<\frac12, \qquad \varepsilon+\delta>0.
\end{align*}
We determine the distribution functions $F_\infty^{(1)}$ and $F_\infty^{(2)}$, provide their generating functions, and compute the explicit moments of the random variables $\mathcal{M}_1$ and $\mathcal{M}_2$.
\end{ex}
In this example $\mathbb{E}S_2=-2(\varepsilon+\delta)<0$ and 
\begin{align*}
G_{S_2}(s)
=
\frac{(1+2\varepsilon)(1+2\delta)}{4s^2}
+
\frac{1-4\varepsilon\delta}{2}
+
\frac{(1-2\varepsilon)(1-2\delta)}{4}s^2=1\quad \Rightarrow
\end{align*}
\begin{align*}
\frac{(1-2\varepsilon)(1-2\delta)}{4s^2}
\left(s^2-1\right)
\left(
s^2-
\frac{(1+2\varepsilon)(1+2\delta)}
{(1-2\varepsilon)(1-2\delta)}
\right)=0.
\end{align*}
has a root $\alpha:=-1$ on the boundary of the unit disk. According to Lemma \ref{L1}, 
\begin{align*}
\begin{pmatrix}
1/2+\delta &
-\left(1/2+\varepsilon\right)
\\[2mm]
1/2+\delta &
1/2+\varepsilon
\end{pmatrix}
\begin{pmatrix}
\pi_0^{(1)}\\
\pi_0^{(2)}
\end{pmatrix}
=
\begin{pmatrix}
0\\
2(\varepsilon+\delta)
\end{pmatrix}\quad \Rightarrow \quad
\pi_0^{(1)}
=
\frac{2(\varepsilon+\delta)}
{(1+2\delta)},\quad
\pi_0^{(2)}
=
\frac{2(\varepsilon+\delta)}
{(1+2\varepsilon)}.
\end{align*}

Proposition \ref{T3} gives 
\begin{align*}
F_\infty^{(1)}(-1)
=
F_\infty^{(2)}(-1)
=
\varepsilon+\delta,
\end{align*}
while Proposition \ref{T1}, for the well-defined $s$, yields

\begin{align}\label{rational_f_1}
\frac{G_{\mathcal M_1}(s)}{1-s}
&=
\frac{
2(\varepsilon+\delta)
\left(
(1+2\varepsilon)
+2s
+(1-2\varepsilon)s^2
\right)
}
{(1+2\varepsilon)(1+2\delta)(1-s^2)(1-Rs^2)
} \quad \Rightarrow \quad
G_{\mathcal{M}_1}(s)=\frac{1-R}{2}\frac{1+2\varepsilon+(1-2\varepsilon)s}{1-Rs^2},
\\
\label{rational_f_2}
\frac{G_{\mathcal M_2}(s)}{1-s}
&=
\frac{
2(\varepsilon+\delta)
\left(
(1+2\delta)
+2s
+(1-2\delta)s^2
\right)
}
{(1+2\varepsilon)(1+2\delta)(1-s^2)(1-Rs^2)
}\quad \Rightarrow \quad
G_{\mathcal{M}_2}(s)=\frac{1-R}{2}\frac{1+2\delta+(1-2\delta)s}{1-Rs^2}.
\end{align}
where
\begin{align*}
R:=\frac{(1-2\varepsilon)(1-2\delta)}{(1+2\varepsilon)(1+2\delta)}\in(0,\,1).
\end{align*}
Since
\begin{align*}
\frac{1}{(1-s^2)(1-Rs^2)}=\frac{1}{1-R}\sum_{n=0}^{\infty}(1-R^{n+1})s^{2n},\quad |s|<1,
\end{align*}
and
\begin{align*}
1-R=\frac{4(\varepsilon+\delta)}{(1+2\varepsilon)(1+2\delta)}\quad \Rightarrow \quad
\frac{2(\varepsilon+\delta)}{(1+2\varepsilon)(1+2\delta)(1-R)}=\frac{1}{2},
\end{align*}
for $|s|<1$, from \eqref{rational_f_1} and \eqref{rational_f_2}, we obtain
\begin{align*}
\frac{G_{\mathcal M_1}(s)}{1-s}&=\frac{1+2\varepsilon
+2s
+(1-2\varepsilon)s^2}{2}\sum_{n=0}^{\infty}(1-R^{n+1})s^{2n},\\
\frac{G_{\mathcal M_2}(s)}{1-s}&=
\frac{1+2\delta
+2s
+(1-2\delta)s^2}{2}\sum_{n=0}^{\infty}(1-R^{n+1})s^{2n}
.
\end{align*}
Therefore, for all $n\in\mathbb{N}_0$,
\begin{align*}
&F_\infty^{(1)}(2n)
=
\frac{(1+2\varepsilon)\left(1-R^{n+1}\right)
+
(1-2\varepsilon)\left(1-R^{n}\right)}{2}
,
\qquad
F_\infty^{(1)}(2n+1)
=
1-R^{n+1},\\
&F_\infty^{(2)}(2n)
=
\frac{(1+2\delta)\left(1-R^{n+1}\right)
+
(1-2\delta)\left(1-R^{n}\right)}{2}
,
\qquad
F_\infty^{(2)}(2n+1)
=
F_\infty^{(1)}(2n+1).
\end{align*}
Consequently, for all $k\in\mathbb{N}_0$,
\begin{align*}
\pi^{(1)}_{2k}&=F_{\infty}^{(1)}(2k)-F_{\infty}^{(1)}(2k-1)=\frac{1+2\varepsilon}{2}(1-R)R^k,\\
\pi^{(1)}_{2k+1}&=F_{\infty}^{(1)}(2k+1)-F_{\infty}^{(1)}(2k)=\frac{1-2\varepsilon}{2}(1-R)R^k,\\
\pi^{(2)}_{2k}&=\frac{1+2\delta}{2}(1-R)R^k,\\
\pi^{(2)}_{2k+1}&=\frac{1-2\delta}{2}(1-R)R^k.
\end{align*}
and, for all $n\in\mathbb{N}$, given that $0<R<1$,
\begin{align*}
\mathbb{E}\mathcal{M}_1^n
&=
\frac{1-R}{2}
\sum_{k=0}^{\infty}
\left(
(1+2\varepsilon)(2k)^n
+
(1-2\varepsilon)(2k+1)^n
\right)
R^k,\\
\mathbb{E}\mathcal{M}_2^n
&=
\frac{1-R}{2}
\sum_{k=0}^{\infty}
\left(
(1+2\delta)(2k)^n
+
(1-2\delta)(2k+1)^n
\right)
R^k.
\end{align*}
From this 
\begin{align*}
&\mathbb E\mathcal M_1
=
\frac{(1-2\varepsilon)(1+\varepsilon-\delta)}
{2(\varepsilon+\delta)},
\qquad
\mathbb{E}\mathcal{M}_1^2
=
\frac{(1-2\varepsilon)
\left(
1+(\varepsilon-\delta)
-(\varepsilon-\delta)^2
+4\varepsilon\delta(\varepsilon-\delta)
\right)}
{2(\varepsilon+\delta)^2}\\
&\mathbb E\mathcal M_2
=
\frac{(1-2\delta)(1+\delta-\varepsilon)}
{2(\varepsilon+\delta)},
\qquad
\mathbb E\mathcal M_2^2
=
\frac{(1-2\delta)
\left(
1+(\delta-\varepsilon)
-(\delta-\varepsilon)^2
+4\delta\varepsilon(\delta-\varepsilon)
\right)}
{2(\varepsilon+\delta)^2}.
\end{align*}

Setting $\delta=\varepsilon$ in Example~\ref{ex_periodic_rademacher} recovers the generating function and distribution of Example~\ref{ex:classical}. Moreover, when $\delta=\varepsilon$ and $n\in\mathbb{N}$, it can be shown that
\begin{align*}
\mathbb{E}\mathcal{M}_1^n=\mathbb{E}\mathcal{M}_2^n=(1-\rho)2^n
\left(
\mathrm{\Phi}\left(\rho^2,\,-n,\,0\right)
+
\rho\,
\mathrm{\Phi}\!\left(\rho^2,\,-n,\,\frac12\right)\right)=(1-\rho)\text{Li}_{-n}(\rho),\quad \rho=\frac{1-2\varepsilon}{1+2\varepsilon},
\end{align*}
where 
\begin{align*}
{\text{Li}}_{-n}(\rho)=\sum_{k=1}^{\infty}k^n\rho^k,\quad \mathrm{\Phi}(z,\,s,\,a)=\sum_{k=0}^{\infty}\frac{z^k}{(k+a)^s},
\qquad |z|<1.
\end{align*}
It can also be shown that
$\mathbb{E}\max\{X_1,\,X_1+X_2,\,\ldots\}
=
\mathbb{E}\max\{X_2,\,X_2+X_1,\,\ldots\}=0
$
iff $\varepsilon=\delta=1/4$.

\begin{ex}(Homogeneous double root example)
Let $N=1$ and suppose $X_1,X_2,\ldots$ are independent copies of discrete random variable $X$ whose probability mass is
$$
\mathbb{P}(X=-3)=\frac{5}{54},\qquad
\mathbb{P}(X=-2)=\frac{11}{27},\qquad
\mathbb{P}(X=1)=\frac{1}{2}.
$$
We determine the distribution function $F^{(1)}_{\infty}=F_{\infty}$ and moments of the random variable $\mathcal{M}$. For convenience, we omit the upper index $^{(1)}$ from other involved notations as well.
\end{ex}

In this example $m_1=-3$, $\mathbb{E}X=-16/27$, and 
\begin{align*}
G_X(s)=\frac{5}{54}s^{-3}+\frac{11}{27}s^{-2}+\frac{1}{2}s=1
\end{align*}
has a double root $\alpha=-1/3$ inside unit disk.

According to Lemma \ref{L1} and instructions beneath it (see {\sc Note 1}), under the existence of double roots, we set up the following system
\begin{align*}
\begin{pmatrix}
F_X(-3)+\alpha F_X(-2)+\alpha^2F_X(-1)
&
\alpha F_X(-3)+\alpha^2F_X(-2)
&
\alpha^2F_X(-3)
\\[2mm]
F_X(-2)+2\alpha F_X(-1)
&
F_X(-3)+2\alpha F_X(-2)
&
2\alpha F_X(-3)
\\[2mm]
F_X(-3)+F_X(-2)+F_X(-1)
&
F_X(-3)+F_X(-2)
&
F_X(-3)
\end{pmatrix}
\begin{pmatrix}
\pi_0\\
\pi_1\\
\pi_2
\end{pmatrix}
=
\begin{pmatrix}
0\\
0\\
-\mathbb{E}X
\end{pmatrix},
\end{align*}
which implies
\begin{align*}
\begin{pmatrix}
-1/54& 2/81 & 5/486
\\[2mm]
1/6 & -13/54 & -5/81
\\[2mm]
59/54 & 16/27 & 5/54
\end{pmatrix}
\begin{pmatrix}
\pi_0\\
\pi_1\\
\pi_2
\end{pmatrix}
=
\begin{pmatrix}
0\\
0\\
16/27
\end{pmatrix}
\quad
\Rightarrow
\quad
\pi_0=\frac{2}{5},\qquad
\pi_1=\frac{6}{25},\qquad
\pi_2=\frac{18}{125}.
\end{align*}

According to Proposition \ref{T3}, 
\begin{align*}
&F_\infty(-3)=\frac{1}{27},\quad
F_\infty(-2)=\frac{2}{9},\quad
F_\infty(-1)=\frac{1}{3}.
\end{align*}

Proposition \ref{T1} gives
\begin{align*}
\frac{G_{\mathcal{M}}(s)}{1-s}
=
\frac{2}{(1-s)(5-3s)}=\frac{1}{1-s}-\frac{3/5}{1-3/5\, s}\quad \Rightarrow \quad F_{\infty}(n)=1-\left(\frac{3}{5}\right)^{n+1},\,n\in\mathbb{N}_0.
\end{align*}
As in Example \ref{ex:classical},
\begin{align*}
\mathbb{E}\mathcal{M}^n=\frac{2}{5}\sum_{k=1}^{\infty}k^n\left(\frac{3}{5}\right)^k,\quad n\in\mathbb{N}.
\end{align*}
In particular
\begin{align*}
\mathbb{E}\mathcal{M}=\frac{3}{2},\qquad \mathbb{E}\mathcal{M}^2=6,
\qquad \mathbb{E}\mathcal{M}^3=\frac{141}{4},\qquad \ldots
\end{align*}
Interestingly, Example~\ref{ex:classical} with $\varepsilon=1/8$ yields the same probability generating function, and hence the same distribution of $\mathcal{M}$, as the present example.

\bibliography{x}
\end{document}